\newtheorem{prop}{Proposition}[section]
\newtheorem{lem}[prop]{Lemma}
\newtheorem{thm}[prop]{Theorem}
\newtheorem{remar}[prop]{Remark}
\newtheorem{cor}[prop]{Corollary}
\DeclareMathAlphabet{\mathpzc}{OT1}{pzc}{m}{it}
\DeclareMathOperator{\Aut}{Aut}
\DeclareMathOperator{\End}{End}
\DeclareMathOperator{\Hom}{Hom}
\DeclareMathOperator{\Ind}{Ind}
\DeclareMathOperator{\cInd}{c-Ind}
\DeclareMathOperator{\Sym}{Sym}
\DeclareMathOperator{\GL}{GL}
\DeclareMathOperator{\soc}{soc}
\DeclareMathOperator{\Irr}{Irr}
\DeclareMathOperator{\id}{id}
\DeclareMathOperator{\Mod}{Mod}
\DeclareMathOperator{\val}{val}
\DeclareMathOperator{\Sp}{Sp}
\DeclareMathOperator{\Ext}{Ext}
\DeclareMathOperator{\dt}{det}
\DeclareMathOperator{\Ban}{Ban}
\newcommand{\cIndu}[3]{\cInd_{#1}^{#2}{#3}}
\newcommand{\Indu}[3]{\Ind_{#1}^{#2}{#3}}
\newcommand{\Qp}{\mathbb {Q}_p}
\newcommand{\Zp}{\mathbb{Z}_p}
\newcommand{\GG}{\mathcal G}
\newcommand{\HH}{\mathcal H}
\newcommand{\NN}{\mathbb N}
\newcommand{\Eins}{\mathbf 1}
\newcommand{\KK}{\mathfrak K}
\newcommand{\ZZ}{\mathbb Z}
\newcommand{\Fp}{\mathbb F_p}
\newcommand{\wP}{\widetilde{P}}
\newcommand{\OO}{\mathcal O}
\newcommand{\BB}{\mathfrak B}
\newcommand{\br}[1]{\llbracket #1\rrbracket}
\newcommand{\sm}{\mathrm{sm}}
\newcommand{\adm}{\mathrm{adm}}
\newcommand{\alg}{\mathrm{alg}}
\title{Blocks for mod $p$ representations of $\GL_2(\Qp)$}
\author{Vytautas Pa\v{s}k\={u}nas}
\date{\today.}
\begin{document} 

\begin{abstract} Let $\pi_1$ and $\pi_2$ be absolutely irreducible smooth representations of $G=\GL_2(\Qp)$ with a central character,
defined over a finite extension of $\mathbb F_p$. We show that if there exists a non-split extension between $\pi_1$ and $\pi_2$ 
then they both appear as subquotients of the reduction modulo $p$ of a unit ball in a crystalline Banach space representation 
of $G$. The results of Berger-Breuil describe such reductions and 
allow us to organize the irreducible representation into blocks. The result is new for $p=2$, the proof, which works for all $p$, 
 is new.  
\end{abstract}
\maketitle
\section{Introduction}
Let $L$ be a finite extension of $\Qp$, with the ring of integers $\OO$, a uniformizer $\varpi$, and residue field $k$, and let 
$G=\GL_2(\Qp)$ and let $B$ be the subgroup of upper-triangular matrices in $G$.

\begin{thm}\label{main} Let $\pi_1$, $\pi_2$ be smooth, absolutely irreducible $k$-representations of $G$ with a central character. 
Suppose that $\Ext^1_G(\pi_2,\pi_1)\neq 0$ then after replacing $L$ by a finite extension, we may find 
integers $(l, k)\in \ZZ\times \NN$ and  unramified characters $\chi_1, \chi_2:\Qp^{\times}\rightarrow L^{\times}$ with 
$\chi_2\neq \chi_1|\centerdot|$, such that $\pi_1$ and $\pi_2$ are subquotients of $\overline{\Pi}^{ss}$, where $\overline{\Pi}^{ss}$
is the semi-simplification of the reduction modulo $\varpi$ of an open bounded $G$-invariant lattice in $\Pi$, where $\Pi$
is the universal unitary completion of 
$$(\Indu{B}{G}{\chi_1\otimes\chi_2|\centerdot|^{-1}})_{\sm}\otimes \dt^l \otimes \Sym^{k-1} L^2.$$
\end{thm}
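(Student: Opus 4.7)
The plan is to proceed by a case analysis, using the Barthel--Livn\'e and Breuil classification of absolutely irreducible smooth $k$-representations of $G$ with a central character. Combined with the known computations of $\Ext^1$ between such representations, the hypothesis $\Ext^1_G(\pi_2,\pi_1)\neq 0$ restricts the ordered pair $(\pi_1,\pi_2)$, up to twist by characters, to a short list of possibilities: a character paired with a twist of the Steinberg, two irreducible principal series lying in a common reducibility block, a supersingular representation admitting a self-extension, and a handful of low-characteristic configurations particular to $p=2$. The remainder of the argument is then an exhibit-and-verify for each case.

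The structural input is the Berger--Breuil theorem, which identifies $\overline{\Pi}^{\mathrm{ss}}$ with the image, under the mod $p$ local Langlands correspondence, of the semisimplification of the reduction modulo $\varpi$ of the crystalline $\gal$-representation whose Hodge--Tate weights are determined by $(l,k)$ and whose crystalline Frobenius eigenvalues are determined by $\chi_1(p), \chi_2(p)$. Granting this, the theorem reduces to a matching problem on the Galois side: for each relevant pair $(\pi_1,\pi_2)$ from the list above, exhibit a $2$-dimensional crystalline representation of $\gal$ whose residual semisimplification contains both $\pi_1$ and $\pi_2$ as constituents under mod $p$ Langlands.

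I would handle each case in turn. When $(\pi_1,\pi_2)$ is formed from two irreducible principal series in a common reducible block, I choose unramified $\chi_1,\chi_2$ whose residual Frobenius eigenvalues match the characters on the diagonal of the target reducible crystalline representation; the character--Steinberg case is analogous after specialising the weight. When $\pi_1$ or $\pi_2$ is supersingular, I lift the associated irreducible $2$-dimensional $\Fbar$-representation of $\gal$ to a crystalline representation in a small Hodge--Tate weight by a Fontaine--Laffaille argument, and Berger's classification of reductions of crystalline representations then supplies the required constituents. In each case one checks, using the universal property of $\Pi$, that the locally algebraic principal series in the statement indeed has the prescribed crystalline representation as its Langlands parameter.

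The main obstacle, and the source of the novelty at $p=2$, is that at this prime the smooth character $|\centerdot|$ becomes trivial, which merges blocks that stay separate for $p>2$, produces extra self-extensions among characters, and forces some of the Berger--Breuil computations to be re-examined. Correctly enumerating the $p=2$ blocks and tracking them through the crystalline side is the most delicate bookkeeping step; once this is in place, the verification is uniform in $p$.
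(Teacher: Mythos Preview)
Your proposal has a genuine circularity at $p=2$. You plan to take as input the ``known computations of $\Ext^1$ between such representations'' in order to reduce to a short list of pairs $(\pi_1,\pi_2)$, and then exhibit a suitable $\Pi$ case by case. But the paper states explicitly that the characteristic~$p$ methods computing these $\Ext^1$ groups \emph{do not work in the exceptional cases when $p=2$}; indeed, the block description in Corollary~\ref{blocks} is \emph{deduced from} Theorem~\ref{main} together with Proposition~\ref{universal}, not fed in as a hypothesis. Your description of the $p=2$ step as ``the most delicate bookkeeping'' misidentifies the obstacle: it is not bookkeeping but the absence of the very $\Ext^1$ data you intend to quote. (A secondary issue in the same spirit: invoking a Fontaine--Laffaille lift for the supersingular case is already problematic at $p=2$, where the usual weight range collapses.)

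The paper's argument avoids this circularity by never invoking any $\Ext^1$ classification. Starting from an arbitrary non-split extension $0\to\pi_1\to\pi\to\pi_2\to 0$, it first disposes of the case $\pi^{I_1}\neq\pi_1^{I_1}$ via the pro-$p$ Iwahori--Hecke algebra (Lemma~\ref{invariants}), and otherwise embeds (a mild modification of) $\pi$ into a smooth $G$-representation $\Omega$ with $\Omega|_K$ an injective envelope of $\soc_K\pi$ (Proposition~\ref{omega}). It then lifts $\Omega$ to an admissible unitary $L$-Banach space representation $E$ of $G$ (Theorem~\ref{lift}), so that $E|_K$ is a summand of $\mathcal C(K,L)^{\oplus r}$ and hence has dense $K$-algebraic vectors (Proposition~\ref{Kalg}). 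From this density one extracts a finitely generated locally algebraic $G$-subrepresentation $R\subset E$ whose reduction already contains $\tau$, and $R$ is filtered by quotients of $\cInd_{KZ}^G\tilde{\Eins}/(T-a)\otimes V_{l,k}$; the universal unitary completions $\Pi_i$ of these pieces, controlled by Proposition~\ref{universal}, then contain $\pi_1$ (via $\soc_G\Omega\cong\pi_1$) and $\pi_2$ (via the filtration) as subquotients of their reductions. The construction is uniform in $p$ and manufactures $\Pi$ directly from the extension class, which is precisely what your case-analysis strategy cannot do at $p=2$.
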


The results of Berger-Breuil \cite{bb}, Berger \cite{berger}, Breuil-Emerton \cite{be} and 
\cite{except} describe explicitly the possibilities for $\overline{\Pi}^{ss}$, see Proposition \ref{universal}. These results  
and the Theorem imply that $\Ext^1_G(\pi_2,\pi_1)$ vanishes in many cases. Let us make this more precise. 

Let $\Mod^{\sm}_G(\OO)$ be the category of smooth $G$-representation on $\OO$-torsion modules. It contains
$\Mod^{\sm}_G(k)$, the category of smooth $G$-representations on $k$-vector spaces, as a full subcategory. 
Every irreducible object $\pi$ of 
$\Mod^{\sm}_G(\OO)$ is killed by $\varpi$, and hence is an object of $\Mod^{\sm}_G(k)$. 
Barthel-Livn\'e \cite{bl} and Breuil \cite{breuil1} have classified the absolutely 
irreducible smooth representations $\pi$ admitting a central character. They fall into four disjoint classes:
\begin{itemize}
\item[(i)] characters $\delta\circ \det$;
\item[(ii)] special series $\Sp\otimes\delta\circ \det$;
\item[(iii)] principal series $(\Indu{B}{G}{\delta_1\otimes\delta_2})_{\sm}$, $\delta_1\neq \delta_2$;
\item[(iv)] supersingular representations,
\end{itemize}
where $\Sp$ is the Steinberg representation, that is 
the locally constant functions from $\mathbb P^1(\Qp)$ to $k$ modulo the constant functions; 
$\delta, \delta_1, \delta_2: \Qp^{\times}\rightarrow k^{\times}$ are smooth characters 
and we consider $\delta_1\otimes\delta_2$
as a character of $B$, which sends $\bigl (\begin{smallmatrix} a & b \\ 0 & d \end{smallmatrix} \bigr )$ to $\delta_1(a)\delta_2(d)$.
Using their results and some easy arguments, see \cite[\S5.3]{cmf},
 one may show that for an irreducible smooth representations $\pi$ the following are equivalent: 1) $\pi$ is admissible, 
which means that $\pi^H$ is finite dimensional for all open subgroups $H$ of $G$; 2) $\End_G(\pi)$ is finite dimensional 
over $k$; 3) there exists a finite extension $k'$ of $k$, such that $\pi\otimes_k k'$ is isomorphic to 
a finite direct sum of distinct absolutely irreducible $k'$-representations with a central character.

Let $\Mod^{\mathrm{l.adm}}_G(\OO)$  be the full subcategory of $\Mod^{\sm}_G(\OO)$, 
consisting of representations, 
which are equal to the union of their admissible subrepresentations. The categories $\Mod^{\sm}_G(\OO)$ and $\Mod^{\mathrm{l.adm}}_G(\OO)$ 
are abelian, see \cite[Prop.2.2.18]{ord1}. We define $\Mod^{\mathrm{l.adm}}_G(k)$ in exactly the same way with $\OO$ replaced by $k$. 
Let $\Irr_G^{\adm}$ be the set of irreducible  representation in $\Mod^{\mathrm{l.adm}}_G(\OO)$, then 
$\Irr_G^{\adm}$  is the set of irreducible representations in 
$\Mod^{\sm}_G(\OO)$ satisfying the equivalent conditions described above.  
We define an equivalence relation $\sim$ on $\Irr_G^{\adm}$: $\pi\sim \tau$, if there exists a sequence of irreducible 
admissible representations 
$\pi=\pi_1, \pi_2, \ldots, \pi_n=\tau$, such that for each $i$ one of the following holds: 1) $\pi_i\cong \pi_{i+1}$;
2) $\Ext^1_G(\pi_i, \pi_{i+1})\neq 0$; 3) $\Ext^1_G(\pi_{i+1}, \pi_{i})\neq 0$. We note that it does not matter for the 
definition of $\sim$, whether we compute $\Ext^1_G$ in $\Mod^{\sm}_G(\OO)$, $\Mod^{\sm}_G(k)$, $\Mod^{\mathrm{l.adm}}_G(\OO)$ 
or  $\Mod^{\mathrm{l.adm}}_G(k)$, since we only care about vanishing or non-vanishing of $\Ext^1_G(\pi_i, \pi_{i+1})$ for distinct 
irreducible representations. A block is an equivalence class of $\sim$.

\begin{cor}\label{blocks} The blocks containing an absolutely irreducible representation are given by the following:

\begin{itemize} 
\item[(i)] $\BB=\{\pi\}$ with $\pi$ supersingular;
\item[(ii)] $\BB=\{(\Indu{B}{G}{\delta_1\otimes\delta_2\omega^{-1}})_{\sm},
(\Indu{B}{G}{\delta_2\otimes\delta_1\omega^{-1}})_{\sm}\}$ with $\delta_2 \delta_1^{-1}\neq \omega^{\pm 1}, \Eins$;
\item[(iii)] $p>2$ and $\BB=\{ (\Indu{B}{G}{\delta\otimes\delta\omega^{-1}})_{\sm}\}$;
\item[(iv)] $p=2$ and $\BB=\{\Eins,  \Sp\}\otimes\delta\circ \det$;
\item[(v)] $p\ge 5$ and $\BB=\{\Eins, \Sp, (\Indu{B}{G}{\omega\otimes \omega^{-1}})_{\sm}\}\otimes \delta\circ \det$;
\item[(vi)] $p=3$ and $\BB=\{\Eins, \Sp, \omega\circ \det, \Sp\otimes \omega\circ \det\}\otimes \delta\circ \det$;
\end{itemize}
where $\delta, \delta_1, \delta_2: \Qp^{\times}\rightarrow k^{\times}$ are smooth characters and where $\omega: 
\Qp^{\times}\rightarrow k^{\times}$ is the character $\omega(x)= x|x| \pmod{\varpi}$.
\end{cor}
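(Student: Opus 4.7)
The strategy is to use Theorem~\ref{main} as the upper-bound mechanism that constrains which pairs of distinct irreducibles can be linked by an $\Ext^1$, and then to exhibit explicit non-split extensions to verify each listed set is actually a single equivalence class under~$\sim$.

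\textbf{Upper bound.} Suppose $\pi_1, \pi_2$ are non-isomorphic absolutely irreducible representations in $\Irr_G^{\adm}$ with $\Ext^1_G(\pi_2,\pi_1)\neq 0$. Theorem~\ref{main} forces both to be Jordan--H\"older constituents of some $\overline{\Pi}^{ss}$ of the described form, and Proposition~\ref{universal} enumerates the possible $\overline{\Pi}^{ss}$. If $\pi_1$ is supersingular, the corresponding $\overline{\Pi}^{ss}$ is itself irreducible supersingular and so $\pi_2\cong\pi_1$, contradicting the hypothesis: this gives block~(i). In the generic reducible case the two constituents are precisely the pair of principal series in~(ii). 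The non-generic reducible cases --- involving characters, the Steinberg representation, and the exceptional principal series $(\Indu{B}{G}{\omega\otimes\omega^{-1}})_{\sm}$ --- supply the candidate edges for (iii)--(vi). This shows each block is contained in one of the listed sets.

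\textbf{Lower bound.} To see that each listed set of size $\geq 2$ is actually a single $\sim$-class, I realize enough non-split extensions as follows: for every pair in the set that needs to be linked, I pick a crystalline Banach space representation $\Pi$ as in Theorem~\ref{main} whose reduction $\overline{\Pi}$ is non-split with constituents exactly that pair, and read off the desired element of $\Ext^1_G$ from the non-split filtration of $\overline{\Pi}$. For~(ii) one such $\Pi$ suffices, as does a single $\Pi$ realizing the extension of $\Eins$ by $\Sp$ for~(iv). For (v) and (vi), several different crystalline lifts must be combined to chain the three (respectively four) constituents into one equivalence class, with each elementary link coming from the non-splitness of the corresponding $\overline{\Pi}$.

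\textbf{Main obstacle.} The prime-dependent case split in (iii)--(vi) is the main bookkeeping challenge. For $p=2$ we have $\omega=\Eins$, so $(\Indu{B}{G}{\omega\otimes\omega^{-1}})_{\sm}$ is reducible (a non-split extension of $\Sp$ by $\Eins$) and its constituents merge into the $\{\Eins,\Sp\}$-block. For $p=3$ we have $\omega=\omega^{-1}$, so $(\Indu{B}{G}{\omega\otimes\omega^{-1}})_{\sm}$ decomposes with constituents $\omega\circ\det$ and $\Sp\otimes\omega\circ\det$, producing the four-element block of~(vi). For $p\geq 5$ the three representations of~(v) remain genuinely distinct. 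The delicate step is matching the Jordan--H\"older constituents of $\overline{\Pi}^{ss}$ from Proposition~\ref{universal} uniformly against these patterns and ruling out linkings beyond the ones listed --- in particular that the exceptional principal series of~(iii) is genuinely isolated.
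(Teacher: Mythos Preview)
Your upper-bound argument matches the paper's: Theorem~\ref{main} together with Proposition~\ref{universal} confines any two $\Ext^1$-linked irreducibles to the constituents of some $\pi\{\delta_1,\delta_2\}$, and the case analysis of those constituents according to $p$ and $\delta_2\delta_1^{-1}$ is exactly what the paper carries out.

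The lower-bound argument, however, has a genuine gap. You write that you will ``pick a crystalline Banach space representation $\Pi$ \ldots\ whose reduction $\overline{\Pi}$ is non-split with constituents exactly that pair'' and then ``read off the desired element of $\Ext^1_G$''. But the non-splitness of the reduction is precisely what is at stake, and you give no mechanism to produce it. Theorem~\ref{main} goes in the opposite direction (from $\Ext^1\neq 0$ to the existence of $\Pi$), so it cannot be invoked here; and nothing in Proposition~\ref{universal} controls more than the semi-simplification $\overline{\Pi}^{ss}$. An irreducible Banach representation can perfectly well have a lattice whose reduction is semisimple.

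The paper closes this gap with two external inputs you do not mention. First, for each $\pi\{\delta_1,\delta_2\}$ it cites Breuil \cite[5.3.3.1, 5.3.3.2, 5.3.4.1]{breuil2} for the existence of an \emph{irreducible} unitary Banach representation $\Pi$ with $\overline{\Pi}^{ss}\cong\pi\{\delta_1,\delta_2\}$. Second, and crucially, it invokes Colmez \cite[Prop.~VII.4.5(i)]{colmez}: an irreducible admissible $\Pi$ admits an open bounded lattice $\Theta$ with $\Theta/\varpi\Theta$ \emph{indecomposable}. The block decomposition \eqref{blocksdecompose} then forces all irreducible subquotients of $\Theta/\varpi\Theta$ into a single block in one stroke --- no pairwise chaining is needed, and no individual $\Ext^1$ is ever computed. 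Your proposed pairwise approach could in principle be made to work, but it would require either these same ingredients or direct $\Ext^1$ computations from \cite{bp,colmez,ord2,ext}; as written it assumes the conclusion. (For case~(iv) alone, the paper notes that the non-split sequence \eqref{spblock} suffices directly, which is close to what you sketch there.)
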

One may view the cases (iii) to (vi) as degenerations of case (ii). A finitely generated smooth admissible 
representation of $G$ is of finite length, \cite[Thm.2.3.8]{ord1}. This makes $\Mod^{\mathrm{l.adm}}_G(\OO)$ into a locally finite category. It follows from 
\cite{gabriel} that every locally finite category decomposes into blocks. In our situation we obtain:
\begin{equation}\label{blocksdecompose}
 \Mod^{\mathrm{l.adm}}_G(\OO)\cong \prod_{\BB\in \Irr_G^{\adm}/\sim} \Mod^{\mathrm{l.adm}}_{G}(\OO)[\BB],
\end{equation}
where $\Mod^{\mathrm{l.adm}}_{G}(\OO)[\BB]$ is the full subcategory of  $\Mod^{\mathrm{l.adm}}_G(\OO)$ consisting of representations, with 
all irreducible subquotients in $\BB$. One can deduce a similar result for the category of admissible unitary $L$-Banach space 
representations of $G$, see \cite[Prop.5.32]{cmf}. 

The result has been previously known for $p>2$. Breuil and the author \cite[\S8]{bp}, Colmez \cite[\S VII]{colmez}, 
Emerton \cite[\S 4]{ord2} and the author \cite{ext} have 
computed $\Ext^1_G(\pi_2, \pi_1)$ by different characteristic $p$ methods, which 
do not work in the exceptional cases, when $p=2$. In this paper, we go via characteristic $0$ and make use of a deep Theorem 
of Berger-Breuil. The proof is less involved, but it does not give any information about the extensions between irreducible 
representations lying in the same block. 

The motivation for these calculations comes from the $p$-adic Langlands correspondence for $\GL_2(\Qp)$. 
Colmez  in \cite{colmez} to a $2$-dimensional absolutely irreducible $L$-representation 
of the absolute Galois group of $\Qp$ has associated an admissible unitary absolutely irreducible 
non-ordinary $L$-Banach space representation of $G$. He showed that his construction induces 
an injection on the  isomorphism classes and asked whether it is a bijection, see \cite[\S 0.13]{colmez}.  
This has been answered affirmatively  in \cite{cmf} for $p\ge 5$, where the knowledge 
of blocks has been used in an essential way. The results of this paper should be useful in dealing with the remaining cases. 

Let us give a rough sketch of the argument. Let $0\rightarrow \pi_1\rightarrow \pi\rightarrow \pi_2\rightarrow 0$ be a non-split extension.
The method of \cite{bp} allows us to embed $\pi$ into $\Omega$, such that $\Omega|_K$ is admissible and an injective object in 
$\Mod^{\sm}_K(k)$, where $K=\GL_2(\Zp)$. Using the results of \cite{comp} we may lift $\Omega$ to an admissible unitary 
$L$-Banach space representation $E$ of $G$, in the sense that we may find a $G$-invariant unit ball $E^0$ in $E$, such that 
$E^0/\varpi E^0\cong \Omega$. Moreover, $E|_K$ is isomorphic to a direct summand of $\mathcal C(K, L)^{\oplus r}$, where 
$\mathcal C(K,L)$ is the space of continuous function with the supremum norm. This implies, using an argument of Emerton, 
that the $K$-algebraic vectors are dense in $E$. As a consequence we find a closed $G$-invariant subspace $\Pi$ of $E$,  
such that the reduction of $\Pi \cap E^0$ modulo $\varpi$ contains $\pi$ as a subrepresentation, and $\Pi$ contains 
$\oplus_{i=1}^m \frac{\cIndu{KZ}{G}{\tilde{\Eins}_i}}{(T-a_i)^{n_i}}\otimes \dt^{l_i} \otimes \Sym^{k_i-1} L^2$ as a dense subrepresentation, 
where $Z$ is the centre of $G$, $\tilde{\Eins}_i: KZ\rightarrow L^{\times}$ is a character, trivial on $K$, $a_i\in L$, and $T$ is a certain Hecke operator 
in $\End_G(\cIndu{KZ}{G}{\tilde{\Eins}_i})$, such that  $\frac{\cIndu{KZ}{G}{\tilde{\Eins}_i}}{(T-a_i)}$ is an unramified principal series 
representation. Once we have this we are in a good shape to prove Theorem \ref{main}.  

\textit{Acknowledgements.} I thank the anonymous referee for the comments, which led to an improvement of the exposition, and Jochen Heinloth
for a stimulating discussion. 

\section{Notation}
Let $L$ be a finite extension of $\Qp$ with the ring of integers $\OO$, uniformizer $\varpi$ and residue field $k$. 
We normalize the valuation $\val$ on $L$ so that $\val(p)=1$, and the norm $|\centerdot|$, so that $|x|=p^{-\val(x)}$, for all $x\in L$. 
Let $G=\GL_2(\Qp)$; $Z$ the centre of $G$; $B$ the subgroup of upper triangular matrices; $K=\GL_2(\Zp)$; 
$I=\{g\in K: g\equiv \bigl (\begin{smallmatrix} * & * \\ 0 & *\end{smallmatrix}\bigr )\pmod{p}\}$;
$I_1=\{g\in K:g\equiv \bigl (\begin{smallmatrix} 1 & * \\ 0 & 1\end{smallmatrix}\bigr )\pmod{p}\}$; 
 let $\KK$ be the $G$-normalizer of $I$; let 
$H=\{\bigl (\begin{smallmatrix} [\lambda] & 0 \\ 0 & [\mu] \end{smallmatrix}\bigr )$: $\lambda, \mu \in \Fp^{\times}\}$, 
where $[\lambda]$ is the Teichm\"uller lift of $\lambda$; 
let  $\mathcal G$ be the subgroup of $G$ generated by matrices
$\bigl (\begin{smallmatrix} p & 0 \\ 0 & p \end{smallmatrix}\bigr )$, 
$\bigl (\begin{smallmatrix} 0 & 1  \\  p & 0\end{smallmatrix}\bigr )$ and $H$. Let 
$G^+=\{g\in G: \val(\det(g))\equiv 0 \pmod{2}\}$. Since we are working with representations of locally pro-$p$ 
groups in characteristic $p$, these representations will not be  semi-simple in general; socle is the maximal 
semi-simple subobject. So for example, $\soc_G \tau$ means the maximal semi-simple $G$-subrepresentation of $\tau$.
Let $\Ban^{\adm}_G(L)$ be the category of admissible unitary $L$-Banach space representations of $G$, studied in \cite{iw}. This 
category is abelian. Let $\Pi$ be an admissible unitary $L$-Banach space representation of $G$, and let $\Theta$ be an open bounded
$G$-invariant lattice in $\Pi$, then $\Theta/\varpi \Theta$ is a smooth admissible
$k$-representation of $G$. If $\Theta/\varpi \Theta$ is of finite length as a $G$-representation, then 
we let $\overline{\Pi}^{ss}$ be the semi-simplification of $\Theta/\varpi \Theta$. Since any two such $\Theta$'s are commensurable, 
$\overline{\Pi}^{ss}$ is independent of the choice of $\Theta$. Universal unitary completions  are discussed in \cite[\S1]{pem}.

\section{Main}
Let $\pi_1$, $\pi_2$ be distinct smooth absolutely irreducible $k$-representation of $G$ with a central character.  
It follows from \cite{bl} and \cite{breuil1} that $\pi_1$ and $\pi_2$ are admissible. 
We suppose that there exists a non-split extension in $\Mod^{\sm}_G(\OO)$: 
\begin{equation}\label{exten}
0\rightarrow \pi_1\rightarrow \pi\rightarrow \pi_2\rightarrow 0.
\end{equation} 
Since $\pi_1$ and $\pi_2$ are distinct and irreducible, 
by examining the long exact sequence induced by multiplication with $\varpi$, we 
deduce that $\pi$ is killed by $\varpi$. A similar argument shows that   the existence of a non-split extension implies that 
the central character of $\pi_1$ is equal to the central character 
of $\pi_2$. Moreover, $\pi$ also has a central character, which is then equal to the central character of $\pi_1$, see 
\cite[Prop.8.1]{ext}. We denote this central character by $\zeta: Z\rightarrow k^{\times}$. After replacing $L$ by 
a quadratic extension and twisting 
by a character we may assume that $\zeta(\bigl( \begin{smallmatrix} p & 0\\ 0 & p\end{smallmatrix}\bigr))=1$.

\begin{lem}\label{invariants} If $\pi_1^{I_1}\neq \pi^{I_1}$ then Theorem \ref{main} holds for $\pi_1$ and $\pi_2$.
\end{lem}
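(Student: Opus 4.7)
The plan is to use the hypothesis to exhibit an appropriate Hecke-eigenvalue quotient $\cInd_{KZ}^{G}\sigma/(T-a)$ whose semi-simplification contains both $\pi_1$ and $\pi_2$, and then pass to characteristic zero via Proposition \ref{universal}.

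I would first construct a surjection $\phi:\cInd_{KZ}^{G}\sigma\twoheadrightarrow \pi$. The hypothesis provides an $H$-eigenvector $v\in \pi^{I_1}\setminus \pi_1^{I_1}$ with some $H$-character $\chi$; this $\chi$ singles out (up to a mild ambiguity that does not matter for the argument) an irreducible smooth $k$-representation $\sigma$ of $K$ with $\sigma^{I_1}\cong \chi$ as an $H$-module, admitting a $K$-embedding into $\pi$ whose image contains $v$. Since $\zeta(\pif)=1$, I extend $\sigma$ to a $KZ$-representation with central character $\zeta$ by letting $\pif$ act trivially; Frobenius reciprocity then produces a $G$-equivariant $\phi:\cInd_{KZ}^{G}\sigma\to \pi$ whose image is all of $\pi$, since it contains $v\notin \pi_1$ and $\pi$ has no proper subrepresentations other than $0$ and $\pi_1$.

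Next I would locate $a\in k$ so that both $\pi_1$ and $\pi_2$ appear among the composition factors of $\cInd_{KZ}^{G}\sigma/(T-a)$, where $T$ is the usual generator of $\End_G(\cInd_{KZ}^{G}\sigma)\cong k[T]$. The composition $\cInd_{KZ}^{G}\sigma\twoheadrightarrow \pi\twoheadrightarrow \pi_2$ factors, by the Barthel--Livn\'e/Breuil classification, through $\cInd_{KZ}^{G}\sigma/(T-a_2)$ for some $a_2\in k$, so $\phi\circ(T-a_2)$ lands in $\pi_1$. If this composition vanishes, $\phi$ descends to $\cInd_{KZ}^{G}\sigma/(T-a_2)$ and I set $a:=a_2$. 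Otherwise it surjects onto $\pi_1$ and in turn factors through $\cInd_{KZ}^{G}\sigma/(T-a_1)$ for some $a_1\in k$, whence $\phi$ factors through $\cInd_{KZ}^{G}\sigma/((T-a_1)(T-a_2))$. If $a_1\neq a_2$, the Chinese Remainder Theorem decomposes this as $\cInd_{KZ}^{G}\sigma/(T-a_1)\oplus \cInd_{KZ}^{G}\sigma/(T-a_2)$ and, using that $\pi$ is non-split of length two, forces one summand to already surject onto $\pi$, reducing to the previous case. If $a_1=a_2=:a$, then $\phi$ factors through $\cInd_{KZ}^{G}\sigma/(T-a)^2$, and a diagram chase comparing the canonical extension $0\to \cInd_{KZ}^{G}\sigma/(T-a)\to \cInd_{KZ}^{G}\sigma/(T-a)^2\to \cInd_{KZ}^{G}\sigma/(T-a)\to 0$ with $0\to \pi_1\to \pi\to \pi_2\to 0$ produces surjections from $\cInd_{KZ}^{G}\sigma/(T-a)$ onto each of $\pi_1$ and $\pi_2$. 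Thus in every case $\pi_1$ and $\pi_2$ both appear among the composition factors of $\cInd_{KZ}^{G}\sigma/(T-a)$.

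Finally I would lift and apply Berger--Breuil. After possibly enlarging $L$, I choose a $KZ$-stable lift $\tilde\sigma\cong \Sym^{k-1}L^2\otimes \dt^l$ of $\sigma$ with $\pif$ acting trivially and $\tilde a\in \OO$ lifting $a$ such that the corresponding unramified Satake characters $\chi_1,\chi_2:\Qp^{\times}\to L^{\times}$ satisfy $\chi_2\neq \chi_1|\centerdot|$; this is a generic condition on $\tilde a$. The standard identification gives $(\cInd_{KZ}^{G}\tilde\sigma/(T-\tilde a))\otimes_{\OO}L\cong (\Indu{B}{G}{\chi_1\otimes \chi_2|\centerdot|^{-1}})_{\sm}\otimes \dt^l\otimes \Sym^{k-1}L^2$, and its universal unitary completion $\Pi$ is, by Proposition \ref{universal}, non-zero and admissible with $\overline{\Pi}^{ss}$ equal to the semi-simplification of $\cInd_{KZ}^{G}\sigma/(T-a)$. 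Hence both $\pi_1$ and $\pi_2$ appear as subquotients of $\overline{\Pi}^{ss}$, establishing Theorem \ref{main} for this pair. The main obstacle I anticipate is the equal-eigenvalue subcase in step two: although $\phi$ need not descend to $\cInd_{KZ}^{G}\sigma/(T-a)$, one must nevertheless verify via the diagram chase that $\cInd_{KZ}^{G}\sigma/(T-a)$ really does have both $\pi_1$ and $\pi_2$ as composition factors.
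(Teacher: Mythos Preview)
Your route diverges from the paper's, which never touches $\cInd_{KZ}^G\sigma$ at all: it works with the pro-$p$ Iwahori--Hecke algebra $\HH=\End_G(\cInd_{I_1Z}^G\zeta)$, uses that $\pi_2^{I_1}$ is an irreducible $\HH$-module (Vign\'eras) together with Ollivier's result to produce a non-split $\HH$-extension $0\to\pi_1^{I_1}\to\pi^{I_1}\to\pi_2^{I_1}\to0$, and then invokes a classification of such extensions \cite[Lem.~5.24]{cmf} to pin down $\{\pi_1,\pi_2\}=\{\eta,\Sp\otimes\eta\}$. After that the required $\Pi$ is written down by hand.

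Your first step contains a genuine gap. You claim that an $H$-eigenvector $v\in\pi^{I_1}\setminus\pi_1^{I_1}$ lies in the image of an embedding $\sigma\hookrightarrow\pi$ with $\sigma$ irreducible over $K$. But an $I_1$-invariant $H$-eigenvector need not lie in the $K$-socle: the $K$-subrepresentation $\langle Kv\rangle$ is a quotient of $\Ind_I^K\chi$, which for $p>2$ is typically a non-split length-two extension whose cyclic generator sits outside the socle. What you actually need is $\soc_K\pi\neq\soc_K\pi_1$, i.e.\ that \emph{some} irreducible $K$-sub of $\pi$ lies outside $\pi_1$; the hypothesis $\pi^{I_1}\neq\pi_1^{I_1}$ does not obviously give this---the paper only uses, and only asserts, the converse implication. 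Without such a $\sigma$ there is no surjection $\cInd_{KZ}^G\sigma\twoheadrightarrow\pi$ and the remainder of your argument cannot begin. A secondary point: in your final step you attribute to Proposition~\ref{universal} both that $\Pi\neq0$ and that $\overline{\Pi}^{ss}$ coincides with the semi-simplification of $\cInd_{KZ}^G\sigma/(T-a)$; the proposition asserts neither. These facts do hold, but via the lattice argument used \emph{inside} the proof of Proposition~\ref{universal}: one checks that $M=\cInd_{KZ}^G\tilde\sigma^0/(T-\tilde a)$ is $\OO$-torsion-free and finitely generated over $\OO[G]$, whence \cite[Prop.~1.17]{pem} identifies the universal completion and gives $\Pi^0/\varpi\Pi^0\cong M/\varpi M$.
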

\begin{proof} 
Since $\zeta$ is continuous, it is trivial on the pro-$p$ group $Z\cap I_1$. We thus may extend $\zeta$
to $ZI_1$, by letting $\zeta(zu)=\zeta(z)$ for all $z\in Z$, $u\in I_1$. If $\tau$ is a smooth $k$-representation 
of $G$ with a central character $\zeta$ then 
$\tau^{I_1}\cong \Hom_{I_1Z}(\zeta, \tau)\cong \Hom_G(\cIndu{KZ}{G}{\zeta}, \tau)$. Thus 
$\tau^{I_1}$ is naturally an $\HH:=\End_G(\cIndu{I_1 Z}{G}{\zeta})$ module. Taking 
$I_1$-invariants of \eqref{exten} we get an exact sequence of $\HH$-modules:
\begin{equation}\label{inv}
0\rightarrow \pi_1^{I_1}\rightarrow \pi^{I_1}\rightarrow \pi_2^{I_1}.
\end{equation} 
Since $\pi_2$ is irreducible, $\pi^{I_1}_2$ is an irreducible $\HH$-module by \cite{vigneras}. Hence, if $\pi_1^{I_1}\neq \pi^{I_1}$, then 
the last arrow is surjective. It is shown in \cite{ollivier}, that if $\tau$ is a smooth $k$-representation of $G$, 
with a central character $\zeta$, generated as a $G$-representation by its $I_1$-invariants, then the natural map 
$\tau^{I_1}\otimes_{\HH} \cIndu{KZ}{G}{\zeta}\rightarrow \tau$ is an isomorphism. This implies that the sequence 
$0\rightarrow \pi^{I_1}_1\rightarrow \pi^{I_1}\rightarrow \pi_2^{I_1}\rightarrow 0$ is non-split, and hence defines a non-zero element 
of $\Ext^1_{\HH}(\pi_2^{I_1}, \pi_1^{I_1})$.  
Since $\pi_i\cong \pi_i^{I_1}\otimes_{\HH} \cIndu{KZ}{G}{\zeta}$ for $i=1, 2$, the $\HH$-modules 
$\pi_1^{I_1}$ and $\pi_2^{I_1}$ are non-isomorphic. Non-vanishing of $\Ext^1_{\HH}(\pi_2^{I_1}, \pi_1^{I_1})$ implies that 
there exists a smooth character $\eta:G\rightarrow k^{\times}$ such that either ($\pi_1\cong \eta$ and $\pi_2\cong \Sp\otimes \eta$)
or  ($\pi_2\cong \eta$ and $\pi_1\cong \Sp\otimes \eta$), \cite[Lem.5.24]{cmf}, where $\Sp$ is the Steinberg representation. 
In both cases the universal unitary completion of $(\Indu{B}{G}{ |\centerdot|\otimes| \centerdot|^{-1}})_{\sm} \otimes\tilde{\eta}$, 
where $\tilde{\eta}: G\rightarrow \OO^{\times}$ is any smooth character lifting $\eta$, will satisfy the conditions of Theorem \ref{main} by \cite[5.3.18]{emcoates}.
\end{proof}
  
Lemma \ref{invariants} allows to assume that $\pi^{I_1}=\pi^{I_1}_1$. We note that this implies that $\soc_K \pi_1\cong \soc_K\pi$, and, 
since $I_1$ is contained in $G^+$, the restriction of \eqref{exten} to $G^+$ is a non-split extension of $G^+$-representations. 

Now we perform a renaming trick, the purpose of which is to get around some technical issues, when $p=2$. 
If either $p>2$ or $p=2$ and $\pi_1$ is neither a special series nor a character then we let $\tau_1=\pi_1$, $\tau=\pi$ and $\tau_2=\pi_2$.
If $p=2$ and $\pi_1$ is either a special series representation or a character, then we let 
$0\rightarrow \tau_1\rightarrow \tau\rightarrow\tau_2\rightarrow 0$ be the exact sequence obtained by tensoring \eqref{exten} with 
$\Indu{G^+}{G}{\Eins}$. 
In particular, $\tau\cong \pi\otimes\Indu{G^+}{G}{\Eins}$, which implies that $\tau|_{G^+}\cong \pi|_{G^+}\oplus \pi|_{G^+}$ and 
$\tau_1|_{G^+}\cong \pi_1|_{G^+}\oplus \pi_1|_{G^+}$. 
Hence, $\tau^{I_1}= \tau_1^{I_1}$ and $\soc_K \tau\cong \soc_K \tau_1\cong \soc_K \pi_1\oplus \soc_K \pi_1$. This implies that $\soc_G \tau\cong 
\soc_G \tau_1$. 

\begin{lem}\label{socles} $\soc_G \tau\cong \soc_G \tau_1\cong \pi_1$.
\end{lem}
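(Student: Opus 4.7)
The first isomorphism $\soc_G\tau\cong\soc_G\tau_1$ is established in the paragraph just before the statement, so it suffices to show $\soc_G\tau_1\cong\pi_1$. When $\tau_1=\pi_1$, which covers the cases $p>2$ as well as $p=2$ with $\pi_1$ supersingular or principal series, this is immediate from the irreducibility of $\pi_1$. The substantive case to treat is $p=2$ with $\tau_1=\pi_1\otimes A$, where $A:=\Indu{G^+}{G}{\Eins}$.

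My strategy is to exhibit $\tau_1$ as a non-split length-two extension of $\pi_1$ by $\pi_1$. Since $G/G^+\cong\ZZ/2$ and $p=2$, the group algebra $k[G/G^+]\cong k[\epsilon]/(\epsilon^2)$ is local and $A$ is the regular representation of $G/G^+$, sitting in a non-split short exact sequence $0\to\Eins\to A\to\Eins\to 0$. Tensoring by $\pi_1$ produces a filtration $0\subset\pi_1\subset\tau_1$ whose subquotients are both $\cong\pi_1$; hence $\tau_1$ has length two as a $G$-representation and $\pi_1\hookrightarrow\soc_G\tau_1$.

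It remains to show that $\tau_1$ is indecomposable as a $G$-representation, for then $\soc_G\tau_1$ cannot be the full $\pi_1\oplus\pi_1$ (which would force semisimplicity and hence a splitting), and so must coincide with the embedded $\pi_1$. For indecomposability I would compute $\End_G(\tau_1)$. Writing $\tau_1\cong\Indu{G^+}{G}{\pi_1|_{G^+}}$ via the projection formula, Frobenius reciprocity combined with the Mackey decomposition (applicable because $G^+$ is normal of index $2$ and, since $\pi_1$ extends to $G$, the conjugate of $\pi_1|_{G^+}$ by any element of $G\setminus G^+$ is again $\pi_1|_{G^+}$) yields
$$\End_G(\tau_1)\cong\Hom_{G^+}(\pi_1|_{G^+},\,\pi_1|_{G^+}\oplus\pi_1|_{G^+}),$$
which inherits the algebra structure of the Hecke algebra $k[G/G^+]\cong k[\epsilon]/(\epsilon^2)$, a local ring. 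Indecomposability of $\tau_1$, and therefore the lemma, follows at once.

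The one point demanding care is that the Hecke algebra computation tacitly requires $\End_{G^+}(\pi_1|_{G^+})=k$, i.e.\ that $\pi_1|_{G^+}$ is absolutely irreducible. For $\pi_1=\delta\circ\det$ this is trivial; for $\pi_1=\Sp\otimes\delta\circ\det$ it has to be verified from the explicit model of the Steinberg representation, and this is the main point where one must use specific information about the representation theory of $\GL_2(\Qp)$ at the exceptional prime $p=2$. Should the direct endomorphism-ring computation be inconvenient in that subcase, an alternative is to show by a functoriality argument that the map $-\otimes\pi_1\colon\Ext^1_G(\Eins,\Eins)\to\Ext^1_G(\pi_1,\pi_1)$ carries the non-zero class $[A]$ to a non-zero class $[\tau_1]$, which directly yields non-splitness and hence the socle claim.
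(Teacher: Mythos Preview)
Your strategy is sound and reaches the conclusion, but it differs from the paper's route and leaves one verification open. The paper does not compute $\End_G(\tau_1)$; instead it passes to $I_1$-invariants. Since $I_1\subset G^+$ acts trivially on $A:=\Indu{G^+}{G}{\Eins}$, one has $\tau_1^{I_1}\cong\pi_1^{I_1}\otimes A$ as $\KK$-representations. The key input is that $\pi_1^{I_1}$ is one-dimensional when $\pi_1$ is a character or a twist of Steinberg, so $\KK$ acts on it by a character $\chi$; and $A|_{\KK}\cong\Indu{ZI_1}{\KK}{\Eins}$ is the regular representation of $\KK/ZI_1\cong\ZZ/2\ZZ$ in characteristic~$2$, a non-split self-extension of $\Eins$. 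Tensoring with the one-dimensional $\chi$ keeps it non-split, so $\tau_1^{I_1}$ is a non-split $\KK$-extension of $\chi$ by $\chi$, and any $G$-splitting of $\tau_1$ would restrict to a $\KK$-splitting of $\tau_1^{I_1}$. The paper's route is shorter because the tensor is with something one-dimensional, whereas your Hecke-algebra identification $\End_G(\tau_1)\cong k[G/G^+]$ needs $\End_{G^+}(\pi_1|_{G^+})=k$.

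That missing verification for $\pi_1=\Sp\otimes\delta\circ\det$ can in fact be completed with the very same input $\dim_k\Sp^{I_1}=1$: any nonzero $G^+$-subrepresentation of $\Sp$ has nonzero $I_1$-invariants (as $I_1\subset G^+$ is pro-$p$) and hence contains $\Sp^{I_1}$, so any two nonzero $G^+$-subrepresentations meet; thus $\Sp|_{G^+}$ has a unique irreducible submodule $W_0$, and since $\Sp=W_0+\Pi W_0$ for any $\Pi\in G\setminus G^+$ with $\Pi W_0$ again irreducible, the intersection argument forces $W_0=\Pi W_0=\Sp$. So your main line can be closed. Your proposed fallback, however, is not a genuine alternative: showing that $-\otimes\pi_1$ sends $[A]\in\Ext^1_G(\Eins,\Eins)$ to a nonzero class in $\Ext^1_G(\pi_1,\pi_1)$ is precisely the non-splitness of $\tau_1$ you are trying to prove.
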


\begin{proof} We already know that 
$\soc_G\tau\cong \soc_G \tau_1$ and we only need to consider the case $p=2$ and $\pi_1$ is either special series  or a character. 
The  assumption on $\pi_1$ implies that 
$\pi_1^{I_1}$ is one dimensional. Let $\mathfrak K$ be the normalizer of $I_1$ in $G$, then $I_1Z$ is a subgroup of $\mathfrak K$ of index $2$. 
We note that $I=I_1$  as $p=2$. Thus $\mathfrak K$ acts on $\pi_1^{I_1}$ by a character $\chi$, such that the restriction of 
$\chi$ to $I_1Z$ is equal to $\zeta$. Since $p=2$, we have an exact non-split sequence of $G$-representations 
$0\rightarrow \Eins \rightarrow \Indu{G^+}{G}{\Eins}\rightarrow \Eins \rightarrow 0$.  We note that $G^+$ and hence $ZI_1$ act trivially on all 
the terms in this sequence. By tensoring with $\pi_1$ we obtain an exact sequence
$0\rightarrow \pi_1\rightarrow \tau_1\rightarrow \pi_1\rightarrow 0$
of $G$-representations. Taking $I_1$-invariants, gives us an isomorphism of $\KK$-rep\-re\-sen\-ta\-tions 
$\tau_1^{I_1}\cong \pi^{I_1}_1\otimes\Indu{ZI_1}{\KK}{\Eins}$. This representation is a non-split extension of $\chi$ by itself. Thus 
$\tau_1$ is a non-split extension of $\pi_1$ by itself.  Hence, $\soc_G\tau_1\cong \pi_1$.   
\end{proof}
 
If $p=2$ then $\tau_1^{I_1}$ is $2$-dimensional and has a basis of the form $\{ v, \bigl (\begin{smallmatrix} 0 & 1\\ p & 0 \end{smallmatrix}
\bigr ) v \}$: if $\pi_1$ is either a character or special series, this follows from the isomorphism $\tau_1^{I_1}\cong \pi^{I_1}_1\otimes\Indu{ZI_1}{\KK}{\Eins}$, 
otherwise $\tau_1=\pi_1$ and the assertion follows from \cite[Cor. 6.4 (i)]{bp} noting that  the work of Bartel-Livn\'e \cite{bl} and 
Breuil \cite{breuil1} on classification of irreducible representations of $G$ implies that $\pi^{I_1}$ is isomorphic as a module of the pro-$p$ Iwahori Hecke algebra to $M(r, \lambda, \eta)$ defined in \cite[Def. 6.2]{bp}.
 Since $\tau^{I_1}=\tau_1^{I_1}$, \cite[Prop.9.2]{bp} implies that the inclusion $\tau^{I_1}\hookrightarrow \tau$ has 
a $\mathcal G$-equivariant section. 

\begin{prop}\label{omega} There exists a $G$-equivariant injection $\tau\hookrightarrow \Omega$, where $\Omega$ is a smooth $k$-representation 
of $G$, such that $\Omega|_K$ is an injective envelope of $\soc_K \tau$ in $\Mod^{\sm}_{K}(k)$, 
$\bigl (\begin{smallmatrix} p & 0 \\ 0 & p \end{smallmatrix}\bigr )$ acts trivially on $\Omega$  and $\Omega|_{\KK}\cong \Indu{\GG}{\KK}{\Omega^{I_1}}$.
\end{prop}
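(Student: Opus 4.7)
The strategy is to implement the diagrammatic construction of \cite[\S 9]{bp}: produce $\Omega$ as a $K$-injective envelope of $\sigma := \soc_K \tau$, equip it with a compatible $\KK$-action, and then amalgamate the two into a $G$-action via Serre's decomposition of $G/Z_1$ acting on the Bruhat--Tits tree, where $Z_1 := \langle \bigl( \begin{smallmatrix} p & 0 \\ 0 & p \end{smallmatrix} \bigr) \rangle$.

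The first step is to let $J$ be an injective envelope of $\sigma$ in $\Mod^{\sm}_K(k)$. Essentiality of the inclusion $\sigma \hookrightarrow \tau|_K$ yields a $K$-equivariant embedding $\tau|_K \hookrightarrow J$. Next, the $K$-action on $J$ must be extended to a $\KK$-action, using as input the $\KK$-module structure on $\tau^{I_1}$ recalled immediately before the proposition: by \cite[Cor. 6.4]{bp} together with the classification results of \cite{bl} and \cite{breuil1}, $\tau^{I_1}$ is of the form $\Indu{\GG}{\KK}{-}$, and one extends this to a $\KK$-structure on $J^{I_1}$ by inducing on each $H$-isotypic component (using that $|H|$ is coprime to $p$, so these components are canonically defined). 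The key input from \cite[\S 9]{bp} is that this $\KK$-action on $J^{I_1}$ lifts uniquely to a $\KK$-action on $J$ itself, compatible with the $K$-action and with the embedding $\tau \hookrightarrow J$, making $\bigl( \begin{smallmatrix} p & 0 \\ 0 & p \end{smallmatrix} \bigr)$ act trivially and exhibiting $J|_{\KK} \cong \Indu{\GG}{\KK}{J^{I_1}}$.

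Finally, Serre's amalgam presentation $G/Z_1 \cong (K/Z_1) *_{(IZ)/Z_1} (\KK/Z_1)$ produces a unique $G$-action on $J$ from compatible $K$- and $\KK$-actions agreeing on $IZ$; this compatibility holds by construction, since the $\KK$-action on $J$ is built as an extension of the $K$-action. Setting $\Omega := J$ equipped with this $G$-action yields the three claimed properties, and $\tau \hookrightarrow \Omega$ is $G$-equivariant because it is compatible with both the $K$- and the $\KK$-actions.

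The main technical obstacle is the second step: lifting the $\KK$-action from $J^{I_1}$ to all of $J$ while remaining compatible with the $K$-action on the intersection $IZ$. This is a delicate rigidity statement for injective envelopes of irreducible $K$-representations, and is precisely what the machinery of \cite[\S 9]{bp} is engineered to handle. The renaming trick performed before Lemma \ref{socles} ensures that the $\KK$-structure on $\tau^{I_1}$ takes the required form uniformly in $p$, including the exceptional case $p = 2$.
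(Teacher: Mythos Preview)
Your proposal is correct and follows essentially the same approach as the paper: both invoke the construction of \cite[\S 9]{bp}, with the renaming trick and the $\GG$-equivariant section established just before the proposition serving as the required input. The paper's proof is simply terser, citing \cite[Cor.~9.11]{bp} directly for the first two properties and noting that the isomorphism $\Omega|_{\KK}\cong \Indu{\GG}{\KK}{\Omega^{I_1}}$ is a byproduct of the construction of the $\bigl(\begin{smallmatrix} 0 & 1 \\ p & 0 \end{smallmatrix}\bigr)$-action in \cite[Lem.~9.6]{bp}, whereas you have unpacked the underlying mechanism (injective envelope, extension of the $\KK$-action, amalgam gluing over the tree).
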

\begin{proof} The existence of $\Omega$ satisfying the first two conditions follows from  \cite[Cor.9.11]{bp}. The last condition is satisfied as a byproduct of the construction 
of the action of $\bigl (\begin{smallmatrix} 0 & 1 \\ p & 0 \end{smallmatrix}\bigr )$ in \cite[Lem. 9.6]{bp}.
\end{proof}

\begin{cor}\label{socleomega} Let $\Omega$ be as above then $\soc_K \Omega \cong \soc_K \tau_1$ and $\soc_G \Omega\cong \pi_1$.
\end{cor}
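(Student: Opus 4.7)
The first claim is essentially tautological. By Proposition \ref{omega}, $\Omega|_K$ is an injective envelope of $\soc_K\tau$, and the defining essential-extension property of an injective envelope preserves the socle, giving $\soc_K\Omega = \soc_K\tau$. The identification $\soc_K\tau \cong \soc_K\tau_1$ was recorded in the discussion preceding Lemma \ref{socles} (holding trivially when $\tau_1=\pi_1$, and as the explicit isomorphism $\soc_K\tau_1\cong \soc_K\pi_1\oplus\soc_K\pi_1$ in the renamed case), so $\soc_K\Omega \cong \soc_K\tau_1$.

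For the $G$-socle, one inclusion is immediate: Lemma \ref{socles} gives $\soc_G\tau \cong \pi_1$, and composing with $\tau \hookrightarrow \Omega$ yields $\pi_1 \hookrightarrow \soc_G\Omega$. For the reverse, suppose $\sigma \hookrightarrow \Omega$ is an irreducible $G$-subrepresentation with $\sigma \not\cong \pi_1$. Then $\sigma$ and the copy of $\pi_1$ sitting inside $\Omega$ are non-isomorphic irreducibles, hence intersect trivially, so $\sigma \oplus \pi_1 \hookrightarrow \Omega$. Taking $K$-socles gives
\[
\soc_K\sigma \oplus \soc_K\pi_1 \hookrightarrow \soc_K\Omega \cong \soc_K\tau_1.
\]
In the non-renamed case one has $\tau_1 = \pi_1$, and this already contradicts $\soc_K\sigma \neq 0$, which is automatic since $\sigma$ is a smooth non-zero representation of the pro-$p$ group $K$.

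The main obstacle is the renamed case ($p=2$ with $\pi_1$ a character or a special series), where $\soc_K\tau_1 \cong \soc_K\pi_1 \oplus \soc_K\pi_1$, so the argument above only forces $\soc_K\sigma \hookrightarrow \soc_K\pi_1$ and does not immediately contradict anything. To finish here the plan is to exploit the finer structure from Proposition \ref{omega}, namely $\Omega|_\KK \cong \Indu{\GG}{\KK}{\Omega^{I_1}}$, together with $\tau^{I_1} = \tau_1^{I_1}$: this pins down $\Omega^{I_1}$ as an $\HH$-module tightly enough that the Hecke-module embedding $\sigma^{I_1} \hookrightarrow \Omega^{I_1}$ must be isomorphic to $\pi_1^{I_1} \hookrightarrow \Omega^{I_1}$. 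The theorem of \cite{ollivier} invoked in the proof of Lemma \ref{invariants}, which recovers $\sigma$ from $\sigma^{I_1}$ via $\sigma^{I_1}\otimes_{\HH}\cIndu{KZ}{G}{\zeta}\cong \sigma$, then forces $\sigma \cong \pi_1$, contradicting our assumption and completing the proof.
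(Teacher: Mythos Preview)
Your treatment of the $K$-socle is correct and matches the paper. For the $G$-socle, however, your argument in the renamed case is not a proof: you only sketch a ``plan'' involving $\Omega^{I_1}$ as a Hecke module and the Ollivier reconstruction theorem, without actually carrying it out. In fact this route runs into trouble---you would need to know that $\Omega$ has central character $\zeta$ (so that $\HH$ acts on $\Omega^{I_1}$), and to control $\Omega^{I_1}$ as an $\HH$-module well enough to pin down which irreducible Hecke modules embed in it, neither of which is established. There is also a minor omission in both cases: you only argue that every irreducible $G$-subrepresentation is \emph{isomorphic} to $\pi_1$, not that the socle has multiplicity one.

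The paper avoids all of this with a single observation that works uniformly. Since $\Omega|_K$ is an injective envelope of $\soc_K\tau$, the inclusion $\soc_K\tau\hookrightarrow\Omega$ is essential as $K$-representations; hence every non-zero $K$-invariant subspace of $\Omega$ meets $\soc_K\tau\subseteq\tau$. In particular every non-zero $G$-invariant subspace meets $\tau$. If $\sigma\subseteq\Omega$ is an irreducible $G$-subrepresentation, then $\sigma\cap\tau\neq 0$ forces $\sigma\subseteq\tau$, so $\sigma\subseteq\soc_G\tau\cong\pi_1$ by Lemma~\ref{socles}, and therefore $\sigma$ equals this copy of $\pi_1$ as a subobject. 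This gives $\soc_G\Omega\cong\pi_1$ directly, with no case split and no appeal to Hecke modules.
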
 
\begin{proof} Since $\tau$ is a subrepresentation of $\Omega$, $\soc_K \tau$ is contained in $\soc_K \Omega$. Since $\Omega|_K$ is 
an injective envelope of $\soc_K \tau$, every non-zero $K$-invariant subspace of $\Omega$ intersects $\soc_K \tau$ non-trivially. 
This implies that $\soc_K\tau\cong \soc_K \Omega$. This implies the first assertion, as $\soc_K \tau\cong\soc_K \tau_1$. Moreover, every 
$G$-in\-va\-riant  non-zero subspace of $\Omega$ intersects $\tau$ non-trivially, since those are also $K$-invariant. This implies 
$\soc_G \Omega\cong \soc_G \tau\cong \pi_1$, where the last isomorphism follows from Lemma \ref{socles}.
\end{proof} 

\begin{lem}\label{lift_it} Let $\kappa$ be a finite dimensional $k$-representation of $\GG$ on which $\bigl (\begin{smallmatrix} p & 0 \\ 0 & p \end{smallmatrix}\bigr )$ acts trivially.
There exists an admissible unitary $L$-Banach space representation $(E, \|\centerdot\|)$ of $\KK$, such that 
$\| E\|\subset |L|$, $\bigl (\begin{smallmatrix} p & 0 \\ 0 & p \end{smallmatrix}\bigr )$
acts trivially on $E$, and the reduction modulo $\varpi$ of the unit ball in $E$ is isomorphic to $(\Indu{\GG}{\KK}{\kappa})_{\sm}$ as a $\KK$-representation.
\end{lem}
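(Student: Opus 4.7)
My plan is to lift $\kappa$ to an $\OO$-free representation of $\GG$ and then apply a unitary continuous induction from $\GG$ to $\KK$. Set $Z_0 = \langle \bigl(\begin{smallmatrix} p & 0 \\ 0 & p \end{smallmatrix}\bigr)\rangle$. Since $Z_0$ acts trivially, $\kappa$ is inflated from a representation of the finite quotient $\overline{\GG} := \GG/Z_0$, which is generated by the images of $H$ and $\bigl(\begin{smallmatrix} 0 & 1 \\ p & 0 \end{smallmatrix}\bigr)$ and has order $2(p-1)^2$ when $p$ is odd and $2$ when $p = 2$.

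First, I would show that after possibly enlarging $L$ the representation $\kappa$ lifts to a representation $\tilde\kappa$ of $\overline{\GG}$ on a finite free $\OO$-module. For odd $p$, $|\overline{\GG}|$ is coprime to $p$, so $k[\overline{\GG}]$ is semisimple and every simple module lifts to $\OO$ by Hensel-lifting primitive idempotents of $\OO[\overline{\GG}]$. For $p = 2$, $\overline{\GG} \cong \ZZ/2$; the two indecomposable $k[\ZZ/2]$-modules, the trivial module $k$ and the regular module $k[t]/(t-1)^2$, lift to $\OO$ and to $\OO[\ZZ/2] \cong \OO[t]/(t^2-1)$ respectively. I then inflate $\tilde\kappa$ to a representation of $\GG$ trivial on $Z_0$.

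Next, I set $E$ to be the $L$-Banach space of continuous functions $f : \KK \to \tilde\kappa_L := \tilde\kappa \otimes_\OO L$ satisfying $f(\gamma k) = \gamma f(k)$ for all $\gamma \in \GG$ and $k \in \KK$, equipped with the supremum norm and the right regular $\KK$-action. The Iwahori decomposition shows that $\KK = I_1 \cdot \GG$ with $I_1 \cap \GG = \{1\}$, so $\GG \backslash \KK$ is homeomorphic to the profinite compact space $I_1$; the supremum is therefore attained, takes values in $|L|$, and makes $E$ into a nonzero $L$-Banach representation on which $\bigl(\begin{smallmatrix} p & 0 \\ 0 & p \end{smallmatrix}\bigr)$ acts trivially. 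Restricted to $I_1$ (acting by right translation through the bijection $\GG \backslash \KK \cong I_1$), $E$ is isomorphic to $\mathcal C(I_1, \tilde\kappa_L)$, whose continuous dual is finitely generated over the Iwasawa algebra of $I_1$; hence $E$ is admissible as a $\KK$-representation.

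The unit ball $E^0$ is the subspace of functions with image contained in $\tilde\kappa$. Reducing modulo $\varpi$, any continuous $\kappa$-valued $\GG$-equivariant function on $\KK$ is automatically locally constant (since $\GG \backslash \KK$ is profinite and $\kappa$ is discrete), which gives $E^0/\varpi E^0 \cong (\Indu{\GG}{\KK}{\kappa})_{\sm}$ as $\KK$-representations. The only delicate step is the lifting of $\kappa$ when $p = 2$; it goes through because $\overline{\GG}$ is cyclic of order $2$ and both of its indecomposable $k$-representations lift, as noted above.
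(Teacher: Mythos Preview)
Your proof is correct and follows the same strategy as the paper's: lift $\kappa$ to a $\GG$-stable $\OO$-lattice (the paper does this by observing that an indecomposable $\kappa$ is either a character of $\GG/p^{\ZZ}$ or induced from a character of $H$, both of which visibly lift, while you argue via idempotent lifting for odd $p$ and an explicit check of the two indecomposables for $p=2$) and then form the continuous induction to $\KK$ with the supremum norm. One remark: your own lifting argument does not in fact require enlarging $L$, so that caveat can be dropped to match the lemma as stated.
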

\begin{proof} It is enough to prove the statement, when $\kappa$ is indecomposable, which we now assume. Let $p^\ZZ$ be the subgroup of $G$ generated by $\bigl (\begin{smallmatrix} p & 0 \\ 0 & p \end{smallmatrix}\bigr )$. Since the order of $H$ is prime to $p$, and $H$ has index $2$ in $\GG/p^{\ZZ}$, 
$\kappa$ is either a character or an induction of a character from $H$ to $\GG/p^{\ZZ}$. In both cases we may lift $\kappa$ to a 
representation $\tilde{\kappa}^0$ of $\GG/p^{\ZZ}$ on a free $\OO$-module of rank $1$ or rank $2$ respectively. Let 
$\tilde{\kappa}=\tilde{\kappa}^0\otimes_{\OO} L$ and let $\|\centerdot\|$ be the gauge of  $\tilde{\kappa}^0$. Then $\|\centerdot\|$
is $\GG$-invariant and $\tilde{\kappa}^0$ is the unit ball with respect to $\|\centerdot\|$. Then 
$(\Indu{\GG/p^{\ZZ}}{\KK/p^{\ZZ}}{\tilde{\kappa}})_{\mathrm{cont}}$ with the norm $\|f\|_1:=\sup_{g\in \KK/p^{\ZZ}} \|f(g)\|$ is a
lift of $(\Indu{\GG/p^{\ZZ}}{\KK/p^{\ZZ}}{\kappa})_{\sm}$, where the subscript $\mathrm{cont}$ indicates continuous induction: 
the space of continuous functions with the right transformation property.
\end{proof}

\begin{thm}\label{lift} Let $\Omega$ be any representation given by Proposition \ref{omega}. Then there exists an admissible unitary $L$-Banach space representation $(E, \|\centerdot\|)$ of $G$, such that 
$\| E\|\subset |L|$, $\bigl (\begin{smallmatrix} p & 0 \\ 0 & p \end{smallmatrix}\bigr )$
acts trivially on $E$, and the reduction modulo $\varpi$ of the unit ball in $E$ is isomorphic to $\Omega$ as a $G$-representation.
\end{thm}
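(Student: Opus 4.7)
The plan is to lift $\Omega$ by separately constructing Banach lifts of the $K$- and $\KK$-actions and then gluing them along their intersection $IZ$. Since $\bigl(\begin{smallmatrix} p & 0 \\ 0 & p \end{smallmatrix}\bigr)$ acts trivially on $\Omega$, it suffices to construct $E$ as a representation of $G/p^{\ZZ}$, and the gluing is justified by the fact that $G/p^{\ZZ}$ is generated by the images of $K$ and $\KK$ with a presentation arising from Bass--Serre theory applied to the action of $\PGL_2(\Qp)$ on its Bruhat--Tits tree (where $K$ stabilizes a vertex and $\KK$ stabilizes an incident edge with index $2$ over the edge stabilizer $IZ/p^\ZZ$).

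First I lift $\Omega|_\KK$ using Lemma \ref{lift_it}. Since $\Omega^{I_1}$ is a possibly infinite-dimensional smooth $\GG$-representation on which $p^\ZZ$ acts trivially, I write it as a filtered union of its finite-dimensional $\GG$-subrepresentations $\kappa_\alpha$, apply Lemma \ref{lift_it} to each to obtain admissible unitary $\KK$-Banach lifts of $\Indu{\GG}{\KK}{\kappa_\alpha}$, and take the completed direct limit. Because $\Indu{\GG}{\KK}{-}$ commutes with direct limits and $\KK/\GG$ has order two, this yields an admissible unitary $\KK$-Banach representation $E_\KK$ with $\|E_\KK\|\subset |L|$ and trivial $p^\ZZ$-action whose unit-ball reduction is $\Omega|_\KK \cong \Indu{\GG}{\KK}{\Omega^{I_1}}$.

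Next I lift $\Omega|_K$ to a Banach $K$-representation $E_K$. Since $\Omega|_K$ is an injective envelope in $\Mod^{\sm}_K(k)$ of the finite-dimensional $K$-representation $\soc_K\tau_1$, the lifting theory of \cite{comp} produces $E_K$ as a direct summand of $\mathcal C(K, L)^{\oplus r}$ for some $r$, reducing modulo $\varpi$ to $\Omega|_K$ and carrying a trivial $p^\ZZ$-action. I then arrange an identification $E_K|_{IZ} \cong E_\KK|_{IZ}$ as admissible unitary $IZ$-Banach representations lifting $\Omega|_{IZ}$; this is available because $\Omega|_{IZ}$ is determined by its $I_1$-invariants $\Omega^{I_1}$ together with the natural $IZ/I_1$-action, and both $E_K$ and $E_\KK$ carry lifted pro-$p$ Iwahori invariants by construction. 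Once the two actions agree on $IZ$, the presentation of $G/p^\ZZ$ produces a continuous $G$-action on the common Banach space $E:=E_K=E_\KK$. The norm on $E$ is the common $K$- and $\KK$-invariant norm, $\|E\|\subset |L|$, and admissibility of $E$ is inherited from the description $E|_K$ as a direct summand of $\mathcal C(K, L)^{\oplus r}$, while the reduction modulo $\varpi$ recovers $\Omega$ by construction.

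The main obstacle is the matching of the two lifts on $IZ$. This requires a uniqueness-type statement for Banach lifts of $\Omega|_{IZ}$, and possibly the modification of one of the lifts by a continuous $IZ$-equivariant automorphism. The essential input is the explicit $\GG$-structure of $\Omega^{I_1}$ provided by Proposition \ref{omega}, which reduces the matching problem to an identification of lifts at the level of $I_1$-invariants — a problem tractable by Hensel-style arguments over $\OO$.
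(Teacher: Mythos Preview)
Your overall architecture---lift $\Omega|_K$ and $\Omega|_{\KK}$ separately to unitary Banach representations, then glue along $IZ$ using the amalgam presentation of $G/p^{\ZZ}$---is exactly the paper's strategy. But two points need correction.

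First, a factual slip: $\Omega^{I_1}$ is \emph{finite}-dimensional. Indeed $\sigma=\soc_K\Omega\cong\soc_K\tau_1$ is finite-dimensional (admissibility of $\tau_1$), and since $\Omega|_K$ is an injective envelope of $\sigma$, the space $\Omega^{I_1}$ is finite-dimensional by \cite[Lem.~6.2.4]{coeff}. So Lemma~\ref{lift_it} applies directly with $\kappa=\Omega^{I_1}$; no filtered-colimit construction is needed (and if $\Omega^{I_1}$ really were infinite-dimensional your ``completed direct limit'' would not be admissible).

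Second, and more seriously, the matching on $IZ$ is the whole content of the theorem, and your sketch does not supply it. Saying that ``$\Omega|_{IZ}$ is determined by its $I_1$-invariants'' and invoking ``Hensel-style arguments'' is not enough: one needs to know that the two specific $\OO$-lifts you have built are isometrically $IZ$-isomorphic \emph{in a way compatible with the given identifications of their reductions with $\Omega$}. The paper's mechanism is this: dualize. If $E_0$ is the $KZ$-lift (constructed as $\Hom_{\OO}^{\mathrm{cont}}(\wP_{\sigma^{\vee}},L)$ with $\wP_{\sigma^{\vee}}$ a projective envelope of $\sigma^{\vee}$ over $\OO\br{K}$) and $E_1$ the $\KK$-lift, set $M=\Hom_{\OO}^{\mathrm{cont}}(E_1^0,\OO)$. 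Then both $M$ and $\wP_{\sigma^{\vee}}$ are $\OO$-torsion-free with reduction $\Omega^{\vee}$, and $\Omega^{\vee}|_I$ is a projective envelope of $(\Omega^{I_1})^{\vee}$ in $\Mod^{\mathrm{pro.aug}}_I(k)$ (because $\Omega|_I$ is injective with socle $\Omega^{I_1}$). By \cite[Prop.~4.6]{comp} both $M$ and $\wP_{\sigma^{\vee}}$ are therefore projective envelopes of $(\Omega^{I_1})^{\vee}$ in $\Mod^{\mathrm{pro.aug}}_I(\OO)$, hence isomorphic; and the surjectivity of $\Aut_{\OO\br{I}}(\wP_{\sigma^{\vee}})\to\Aut_{k\br{I}}(\wP_{\sigma^{\vee}}/\varpi)$ from \cite[Cor.~4.7]{comp} lets one choose the isomorphism so that the reductions match the identity on $\Omega^{\vee}$. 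Dualizing back gives the required isometric $IZ$-isomorphism $E_1\cong E_0$ lifting $\id_\Omega$. This uniqueness-of-projective-envelope argument is the missing idea in your proposal; without it the gluing step is unjustified.
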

\begin{proof} If $p\neq 2$ this is shown in \cite[Thm.6.1]{comp}. We will observe that the renaming trick allows us to carry out 
essentially the same proof when $p=2$. We make no assumption on $p$. 

We first lift $\Omega|_K$ to characteristic $0$. Let $\sigma$ be the $K$-socle of $\Omega$. Pontryagin duality induces an anti-equivalence of categories between $\Mod^{\sm}_K(k)$ and the category of pseudocompact $k\br{K}$-modules, which we denote by $\Mod^{\mathrm{pro.aug}}_K(k)$. Since $\Omega$ is an injective envelope of $\sigma$ in $\Mod^{\sm}_K(k)$, 
its Pontryagin dual $\Omega^{\vee}$ is a projective envelope of $\sigma^{\vee}$ in $\Mod^{\mathrm{pro.aug}}_K(k)$. Let $\wP_{\sigma^{\vee}}$ be a projective envelope of $\sigma^{\vee}$ in the category of pseudocompact $\OO\br{K}$-modules. Then $\wP_{\sigma^{\vee}}/\varpi \wP_{\sigma^{\vee}}$ is a  projective envelope of $\sigma^{\vee}$ in $\Mod^{\mathrm{pro.aug}}_K(k)$. Since projective envelopes are unique up to isomorphism, we obtain 
$\Omega^{\vee}\cong \wP_{\sigma^{\vee}}/\varpi \wP_{\sigma^{\vee}}$. Since $\tau_1$ is admissible and $\sigma\cong \soc_K \tau_1$ by Corollary \ref{socleomega}, $\sigma$ is a
finite dimensional $k$-vector space. In particular, $\sigma^{\vee}$ is a finitely generated $\OO\br {K}$-module, and so there exists a surjection 
of $\OO\br{K}$-modules $\OO\br{K}^{\oplus r}\twoheadrightarrow \sigma^{\vee}$. Since $\OO\br{K}^{\oplus r}$ is projective, and 
$\wP_{\sigma^{\vee}}\twoheadrightarrow \sigma^{\vee}$ is essential, the surjection factors through $\OO\br{K}^{\oplus r}\twoheadrightarrow \wP_{\sigma^{\vee}}$, and so $\wP_{\sigma^{\vee}}$ is a finitely generated $\OO\br{K}$-module. Since $\wP_{\sigma^{\vee}}$ is projective, 
we deduce that it is a direct summand of $\OO\br{K}^{\oplus r}$, and hence it is $\OO$-torsion free.

Thus $\wP_{\sigma^{\vee}}$ is an $\OO$-torsion free, finitely generated $\OO\br{K}$-module, and its reduction modulo $\varpi$ is isomorphic 
to $\Omega^{\vee}$ in $\Mod^{\mathrm{pro.aug}}_K(k)$. Let $E_0=\Hom_{\OO}^{cont}(\wP_{\sigma^{\vee}}, L)$, and let $\|\centerdot\|_0$ be the supremum norm.
It follows from \cite{iw} that $E_0$ is an admissible unitary $L$-Banach space representation of $K$. Moreover, the unit ball $E^0_0$ in $E_0$ 
is $\Hom_{\OO}^{cont}(\wP_{\sigma^{\vee}}, \OO)$ and 
$$ \Hom_{\OO}^{cont}(\wP_{\sigma^{\vee}}, \OO)\otimes_{\OO} k\cong  \Hom_{\OO}^{cont}(\wP_{\sigma^{\vee}}, k)\cong 
\Hom_k^{cont}(P_{\sigma^{\vee}}, k)\cong (\Omega^{\vee})^{\vee}\cong \Omega,$$ 
see \cite[\S5]{comp} for details. We extend the action of $K$ on $E_0$ to the action of $KZ$ by letting 
$\bigl (\begin{smallmatrix} p & 0 \\ 0 & p \end{smallmatrix}\bigr )$ act trivially.

Since $\sigma$ is finite dimensional, it follows from \cite[Lem.6.2.4]{coeff} that $\Omega^{I_1}$  is  a finite dimensional $k$-vector space. Since 
$\Omega|_{\KK}\cong (\Indu{\GG}{\KK}{\Omega^{I_1}})_{\sm}$ by  Proposition \ref{omega}, Lemma \ref{lift_it} implies 
that there exists a unitary $L$-Banach space representation $(E_1, \|\centerdot\|_1)$ of $\mathfrak K$, such that $\|E_1\|\subseteq |L|$, 
$\bigl (\begin{smallmatrix} p & 0 \\ 0 & p \end{smallmatrix}\bigr )$
acts trivially on $E_1$ and the reduction of the unit ball $E_1^0$ in $E_1$ modulo $\varpi$ is isomorphic to $\Omega|_{\mathfrak K}$. We claim that 
there exists an isometric, $IZ$-equivariant isomorphism $\varphi: E_1\rightarrow E_0$ such that the following diagram of $IZ$-representations:
\begin{equation}\label{beauty}
\xymatrix{ E^0_1/\varpi E^0_1\ar[r]^-{\varphi}_-{\mod{\varpi}}\ar[d]^-{\cong}& E^0_0/\varpi E^0_0\ar[d]^-{\cong}\\
 \Omega \ar[r]^-{\id} &\Omega}
\end{equation}
commutes,  where the left vertical arrow is  the given $\KK$-equivariant isomorphism $E^0_1/\varpi E^0_1\cong \Omega|_{\KK}$ and 
the  right vertical arrow is the given $KZ$-equivariant isomorphism $E^0_0/\varpi E^0_0\cong \Omega|_{KZ}$. Granting the claim, 
we may transport the action of  $\KK$ on $E_0$ by using $\varphi$ to obtain a unitary action of $KZ$ and $\KK$ on $E_0$, such that 
the two actions agree on $KZ\cap \KK$, which is equal to $IZ$. The resulting action glues to the unitary action of $G$ on $E_0$, see 
\cite[Cor.5.5.5]{coeff}, which is stated for smooth representations, but the proof of which works for any representation.  
 The commutativity of the above diagram implies that $E^0_0\otimes_{\OO} k\cong \Omega$ as a $G$-representation. 

We will prove the claim now. Let 
$M=\Hom^{cont}_{\OO}(E^0_1, \OO)$ equipped with the topology of pointwise convergence. Then $M$ is an object of $\Mod^{\mathrm{pro.aug}}_{I}(\OO)$,
and $M\otimes_{\OO} k \cong \Omega^{\vee}$ in  $\Mod^{\mathrm{pro.aug}}_{I}(k)$, see \cite[Lem.5.4]{comp}. Since $\Omega|_K$ is injective 
in $\Mod^{\sm}_K(k)$, $\Omega|_I$ is injective in $\Mod^{\sm}_I(k)$. Since $I_1$ is a pro-$p$ group, every non-zero $I$-invariant subspace
of $\Omega$ intersects $\Omega^{I_1}$ non-trivially. Thus $\Omega|_I$ is an injective envelope of $\Omega^{I_1}$ in  $\Mod^{\sm}_I(k)$. 
Hence, $\Omega^{\vee}$ is a projective envelope of $(\Omega^{I_1})^{\vee}$ in $\Mod^{\mathrm{pro.aug}}_I(k)$. Since $M$ is $\OO$-torsion free, 
and $M\otimes_{\OO} k$ is a projective envelope of $(\Omega^{I_1})^\vee$ in $\Mod^{\mathrm{pro.aug}}_I(k)$, \cite[Prop.4.6]{comp} implies that 
$M$ is a projective envelope of $(\Omega^{I_1})^\vee$ in $\Mod^{\mathrm{pro.aug}}_I(\OO)$. The same holds for $\wP_{\sigma^{\vee}}$. Since projective envelopes
are unique up to isomorphism, there exists an isomorphism $\psi: \wP_{\sigma^{\vee}}\overset{\cong}{\rightarrow} M$ in $\Mod^{\mathrm{pro.aug}}_I(\OO)$. 
It follows from \cite[Cor.4.7]{comp} that the natural map $\Aut_{\OO\br{I}}(\wP_{\sigma^{\vee}})\rightarrow 
\Aut_{k\br{I}}(\wP_{\sigma^{\vee}}/\varpi \wP_{\sigma^{\vee}})$ is surjective. Using this we may choose $\psi$ so that the following diagram in 
$\Mod^{\mathrm{pro.aug}}_{K}(k)$: 
 \begin{equation}\label{beauty2}
\xymatrix{ \wP_{\sigma^{\vee}}/\varpi \wP_{\sigma^{\vee}}\ar[r]^-{\psi}_-{\mod{\varpi}}\ar[d]^-{\cong}& M/\varpi M\ar[d]^-{\cong}\\
 \Omega^{\vee} \ar[r]^-{\id} &\Omega^{\vee}}
\end{equation}
commutes. Dually we obtain an isometric $I$-equivariant isomorphism of unitary $L$-Banach space representations of $I$, 
$\psi^d:  \Hom_{\OO}^{cont}(M, L)\rightarrow \Hom_{\OO}^{cont}(\wP_{\sigma^{\vee}}, L)$. It follows from \cite[Thm.1.2]{iw} that 
$(E_1, \|\centerdot\|_1)$ is naturally and isometrically isomorphic to $\Hom^{cont}_{\OO}(M, L)$ with the supremum norm.
This gives our $\varphi$.  The commutativity of \eqref{beauty2} implies the commutativity of \eqref{beauty}.
\end{proof}

\begin{cor}\label{summand} The Banach space representation $(E, \|\centerdot\|)$ constructed in Theorem \ref{lift} is isometrically, $K$-equivariantly isomorphic to 
a direct summand of $\mathcal C(K, L)^{\oplus r}$, where $\mathcal C(K, L)$ is the space of continuous functions 
from $K$ to $L$, equipped with the supremum norm,  and $r$ is a positive integer.
\end{cor}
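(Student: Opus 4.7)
The plan is to dualize the statement, already established in the proof of Theorem \ref{lift}, that $\wP_{\sigma^{\vee}}$ is a direct summand of $\OO\br{K}^{\oplus r}$ for some $r \geq 1$. First I would fix an $\OO\br{K}$-linear splitting $\OO\br{K}^{\oplus r}\cong \wP_{\sigma^{\vee}}\oplus N$ and then apply the functor $\Hom^{cont}_{\OO}(-, L)$, endowing the targets with the supremum norm. By \cite[Thm.1.2]{iw} this functor carries objects of $\Mod^{\mathrm{pro.aug}}_K(\OO)$ that are $\OO$-torsion free and finitely generated into admissible unitary $L$-Banach space representations of $K$, and it converts direct sums into (isometric, $K$-equivariant) direct sums. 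Since the unit ball of $E$ is $\Hom^{cont}_{\OO}(\wP_{\sigma^{\vee}}, \OO)$ by the construction in Theorem \ref{lift}, this dualization exhibits $E$ itself as an isometric $K$-equivariant direct summand of $\Hom^{cont}_{\OO}(\OO\br{K}, L)^{\oplus r}$.

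The remaining step is the identification $\Hom^{cont}_{\OO}(\OO\br{K}, L)\cong \mathcal C(K, L)$ as unitary $L$-Banach space representations of $K$. Since the natural map $K\hookrightarrow \OO\br{K}$ has dense image, restriction gives an injection of the left-hand side into $\mathcal C(K, L)$; conversely, any continuous function $K\rightarrow L$ extends uniquely by $\OO$-linearity and continuity to a continuous $\OO$-linear map on $\OO\br{K}$. Both sides carry the supremum norm and the $K$-action is in each case the one contragredient to left multiplication on $\OO\br{K}$, so the bijection is automatically isometric and $K$-equivariant. Combining these two steps yields the desired embedding of $E$ as a direct summand of $\mathcal C(K, L)^{\oplus r}$.

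There is no serious obstacle here: all the substance is already contained in the projectivity and finite generation of $\wP_{\sigma^{\vee}}$ over $\OO\br{K}$ established inside the proof of Theorem \ref{lift}. The corollary is essentially a formal consequence of that fact via Schikhof duality between finitely generated $\OO$-torsion free pseudocompact $\OO\br{K}$-modules and admissible unitary $L$-Banach space representations of $K$.
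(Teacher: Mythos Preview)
Your argument is correct and essentially identical to the paper's: both use that $\wP_{\sigma^{\vee}}$ is a direct summand of $\OO\br{K}^{\oplus r}$ (established in the proof of Theorem \ref{lift}), dualize via $\Hom^{cont}_{\OO}(-,L)$, and identify $\Hom^{cont}_{\OO}(\OO\br{K},L)\cong\mathcal C(K,L)$. The only cosmetic difference is that the paper cites \cite[Lem.2.1, Cor.2.2]{iw} for this last identification whereas you sketch it directly.
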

\begin{proof} It follows from the construction of $E$, that $(E,\|\centerdot\|)$ is isometrically, $K$-equivariantly isomorphic to 
$\Hom_{\OO}^{cont}(\wP_{\sigma^{\vee}}, L)$ with the supremum norm. Moreover, it follows from the proof of Theorem \ref{lift} that  
$\wP_{\sigma^{\vee}}$ is a direct summand of $\OO\br{K}^{\oplus r}$. It is shown in \cite[Lem.2.1, Cor.2.2]{iw} 
that the natural map $K\rightarrow \OO\br{K}$, $g\mapsto g$ induces an isometrical, $K$-equivariant isomorphism between
$\mathcal C(K, L)$ and $\Hom_{\OO}^{cont}(\OO\br{K}, L)$.
\end{proof}
 
If $F$ is a finite extension of $\Qp$  then  exactly the same proof works. We note that \cite[Thm.9.8]{bp} is proved for $\GL_2(F)$.
We record this as a corollary below. Let 
$\OO_F$ be the ring of integers of $F$, $\varpi_F$ a uniformizer, $k_F$ the residue field, let $\GG_F$ be the subgroup of 
$\GL_2(F)$ generated by the matrices $\bigl (\begin{smallmatrix} \varpi_F & 0 \\ 0 & \varpi_F \end{smallmatrix}\bigr )$, 
$\bigl (\begin{smallmatrix} 0 & 1  \\  \varpi_F & 0\end{smallmatrix}\bigr )$ and 
 $\bigl (\begin{smallmatrix} [\lambda] & 0 \\ 0 & [\mu] \end{smallmatrix}\bigr )$, for $\lambda, \mu \in k_F^{\times}$, where 
$[\lambda]$ is the Teichm\"uller lift of $\lambda$. Let $I_1$ be the standard pro-$p$ Iwahori subgroup of $G$.

\begin{cor} Let $\tau$ be an admissible smooth $k$-representation of $\GL_2(F)$, such that 
$\bigl (\begin{smallmatrix} \varpi_F & 0 \\ 0 & \varpi_F \end{smallmatrix}\bigr )$ acts trivially on $\tau$ and if $p=2$ assume that
the inclusion $\tau^{I_1}\hookrightarrow \tau$ has a $\GG_F$-equivariant section. Then there exists a $\GL_2(F)$-equivariant embedding 
$\tau\hookrightarrow \Omega$, such that  $\Omega|_{\GL_2(\OO_F)}$ is an injective envelope of $\GL_2(\OO_F)$-socle of $\tau$ in the category of smooth $k$-representations of $\GL_2(\OO_F)$  and $\bigl (\begin{smallmatrix} \varpi_F & 0 \\ 0 & \varpi_F \end{smallmatrix}\bigr )$ acts trivially 
on $\Omega$. Moreover, we may lift $\Omega$ to an admissible unitary $L$-Banach space representation of $\GL_2(F)$.
\end{cor}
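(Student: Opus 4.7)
The plan is to mirror the proofs of Proposition \ref{omega} and Theorem \ref{lift} verbatim, exploiting that every cited input generalizes from $\GL_2(\Qp)$ to $\GL_2(F)$, as already indicated in the paragraph preceding the corollary. Throughout, let $I_F$ denote the standard Iwahori subgroup of $\GL_2(\OO_F)$, $Z_F$ the center of $\GL_2(F)$, and $\KK_F$ the $\GL_2(F)$-normalizer of $I_1$.

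First, I would construct $\Omega$. The $\GL_2(F)$-version of \cite[Cor.9.11]{bp} yields a $\GL_2(F)$-equivariant embedding $\tau\hookrightarrow \Omega$ such that $\Omega|_{\GL_2(\OO_F)}$ is an injective envelope of $\soc_{\GL_2(\OO_F)}\tau$ in $\Mod^{\sm}_{\GL_2(\OO_F)}(k)$ and $\bigl(\begin{smallmatrix} \varpi_F & 0 \\ 0 & \varpi_F \end{smallmatrix}\bigr)$ acts trivially. A byproduct of the construction, exactly as in Proposition \ref{omega}, is the identity $\Omega|_{\KK_F}\cong (\Indu{\GG_F}{\KK_F}{\Omega^{I_1}})_{\sm}$, obtained from the $F$-analogue of \cite[Lem.9.6]{bp}. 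The hypothesis for $p=2$ that $\tau^{I_1}\hookrightarrow \tau$ admits a $\GG_F$-equivariant section enters here via the $F$-version of \cite[Prop.9.2]{bp}; this is precisely the role played by the renaming trick in the body of the paper.

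Next, I would lift $\Omega$ to a unitary $L$-Banach space representation of $\GL_2(F)$ by following the proof of Theorem \ref{lift} step by step. On the $\GL_2(\OO_F)Z_F$-side, admissibility of $\tau$ ensures that the socle $\sigma$ of $\Omega|_{\GL_2(\OO_F)}$ is finite dimensional, hence $\sigma^{\vee}$ is finitely generated over $\OO\br{\GL_2(\OO_F)}$; a projective envelope $\wP_{\sigma^{\vee}}$ in pseudocompact $\OO\br{\GL_2(\OO_F)}$-modules is then $\OO$-torsion free and finitely generated, with reduction mod $\varpi$ equal to $\Omega^{\vee}$. Setting $E_0=\Hom_{\OO}^{cont}(\wP_{\sigma^{\vee}},L)$ with the supremum norm and letting the central matrix $\bigl(\begin{smallmatrix} \varpi_F & 0 \\ 0 & \varpi_F \end{smallmatrix}\bigr)$ act trivially produces an admissible unitary $L$-Banach space representation of $\GL_2(\OO_F)Z_F$ whose unit ball reduces to $\Omega|_{\GL_2(\OO_F)Z_F}$. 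On the $\KK_F$-side, the $\GG_F$-version of Lemma \ref{lift_it} applies because $H_F=\{\bigl(\begin{smallmatrix}[\lambda]&0\\0&[\mu]\end{smallmatrix}\bigr):\lambda,\mu\in k_F^{\times}\}$ has order $(|k_F|-1)^2$ prime to $p$ and index $2$ in $\GG_F/\varpi_F^{\ZZ}$; this yields a unitary $L$-Banach space representation $(E_1,\|\centerdot\|_1)$ of $\KK_F$ whose unit ball reduces mod $\varpi$ to $\Omega|_{\KK_F}$.

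The gluing is then identical to the proof of Theorem \ref{lift}: the uniqueness of projective envelopes in $\Mod^{\mathrm{pro.aug}}_{I_F}(\OO)$ together with \cite[Cor.4.7]{comp} produces an isometric, $I_F$-equivariant isomorphism $\varphi:E_1\to E_0$ that reduces mod $\varpi$ to the identity on $\Omega$. Transporting the $\KK_F$-action along $\varphi$, the two actions agree on $I_FZ_F=\GL_2(\OO_F)Z_F\cap \KK_F$ and glue to a unitary $\GL_2(F)$-action on $E_0$ by the $F$-analogue of \cite[Cor.5.5.5]{coeff}. There is no genuinely new obstacle to overcome; the only work is routine verification that each of \cite[Cor.9.11, Prop.9.2, Lem.9.6]{bp}, \cite[Prop.4.6, Cor.4.7, Lem.5.4]{comp}, \cite[Thm.1.2]{iw}, \cite[Cor.5.5.5]{coeff} remains valid with $F$ in place of $\Qp$, which the text preceding the corollary already asserts.
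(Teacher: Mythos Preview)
Your proposal is correct and follows exactly the approach of the paper: the paper gives no separate proof for this corollary, stating only in the preceding paragraph that ``exactly the same proof works'' and that \cite[Thm.~9.8]{bp} is proved for $\GL_2(F)$. You have simply spelled out what this means, tracing through Proposition~\ref{omega}, Lemma~\ref{lift_it}, and Theorem~\ref{lift} with $F$ in place of $\Qp$; one minor remark is that the $\GG_F$-equivariant section is a hypothesis of the corollary and is consumed directly by \cite[Cor.~9.11]{bp}, rather than being produced by \cite[Prop.~9.2]{bp}.
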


\begin{remar} We also note that one could work with a fixed central character throughout.
\end{remar}

Let $V_{l,k}=\det^l \otimes \Sym^{k-1}L^2$, for $k\in \NN$ and $l\in \ZZ$. Rather unfortunately $k$ 
also denotes the residue field of $L$, we hope that this will not cause any confusion.

\begin{prop}\label{Kalg} Let $(E, \|\centerdot\|)$ be a unitary $L$-Banach space representation of $K$ isomorphic in the category of unitary admissible
$L$-Banach space representations of $K$ to a direct summand of $\mathcal C(K, L)^{\oplus r}$.  The evaluation  map
\begin{equation}\label{bigsum}
 \bigoplus_{(l,k)\in \ZZ\times \NN}  \Hom_K(V_{l,k}, E)\otimes V_{l,k}  \rightarrow  E
\end{equation}
is injective and the image is a dense subspace of   $E$. 
Moreover, the subspaces  $\Hom_K(V_{l,k}, E)$ are finite dimensional. 
\end{prop}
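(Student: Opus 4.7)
The plan is to prove the proposition first for the universal case $E=\mathcal C(K,L)$ (with the right regular action of $K$), then observe that it passes trivially to $\mathcal C(K,L)^{\oplus r}$, and finally transport it to an arbitrary direct summand.

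For $E=\mathcal C(K,L)$ the starting point is Frobenius reciprocity for continuous induction from the trivial subgroup, which yields a natural isomorphism $\Hom_K(V_{l,k},\mathcal C(K,L))\cong V_{l,k}^*$. Under this isomorphism the evaluation map $\phi\otimes v\mapsto \phi(v)$ is realised as the matrix-coefficient map $\phi_0\otimes v \mapsto (g\mapsto \phi_0(gv))$. Finite-dimensionality of the Hom-space is then immediate, and injectivity follows from the observation that the $V_{l,k}$ are pairwise non-isomorphic absolutely irreducible algebraic representations of $K$ with distinct highest weights: the $V_{l,k}$-isotypic components of $\mathcal C(K,L)$ are in direct sum, and on each summand the evaluation map is the canonical isomorphism $V_{l,k}^*\otimes V_{l,k}\cong \End_L(V_{l,k})$ followed by the embedding as matrix coefficients.

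The key step, and what I expect to be the principal obstacle, is density of the image $A\subset \mathcal C(K,L)$ of the evaluation map, namely the $L$-span of all matrix coefficients of the $V_{l,k}$. I would verify that $A$ is a unital $L$-subalgebra of $\mathcal C(K,L)$: it contains the constants through the trivial representation $V_{0,1}$, and it is multiplicatively closed because the Clebsch--Gordan formula decomposes $V_{l,k}\otimes V_{l',k'}$ into a direct sum of representations $V_{l'',k''}$, so products of matrix coefficients remain matrix coefficients of algebraic representations. Moreover $A$ separates points of $K=\GL_2(\Zp)$: the matrix coefficients of the standard representation $V_{0,2}$ are the four matrix entries of $g$, and $V_{-1,1}$ provides $\det(g)^{-1}$. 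The non-archimedean Stone--Weierstrass theorem of Kaplansky (applied to the compact totally disconnected space $K$ and the complete non-archimedean field $L$) then delivers density of $A$ in $\mathcal C(K,L)$.

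The case of $\mathcal C(K,L)^{\oplus r}$ follows at once. For a direct summand $E$, I would fix a continuous $K$-equivariant projection $p\colon \mathcal C(K,L)^{\oplus r}\twoheadrightarrow E$ together with its splitting $\iota$. Post-composition with $\iota$ gives an injection $\Hom_K(V_{l,k},E)\hookrightarrow \Hom_K(V_{l,k},\mathcal C(K,L)^{\oplus r})$ which preserves finite dimensionality and forces injectivity of the evaluation map for $E$. Density for $E$ is then obtained by approximating $\iota(x)$, for $x\in E$, by a finite sum $\sum \psi_i(v_i)$ arising from the universal evaluation map and then applying $p$: each $p\circ\psi_i$ lies in $\Hom_K(V_{l,k_i},E)$, so that $\sum (p\circ\psi_i)(v_i)$ lies in the image of the evaluation map for $E$ and, because $p$ is bounded, approximates $p(\iota(x))=x$ in $E$, after passing if necessary from $\|\centerdot\|$ to the equivalent norm pulled back from $\mathcal C(K,L)^{\oplus r}$.
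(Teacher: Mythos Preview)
Your argument is correct. The reduction to $E=\mathcal C(K,L)$ via an idempotent, the Frobenius reciprocity identification $\Hom_K(V_{l,k},\mathcal C(K,L))\cong V_{l,k}^*$, and the interpretation of the evaluation map as matrix coefficients all coincide with the paper's treatment (carried out in the appendix for a general split reductive group scheme over $\Zp$). One point you use implicitly and might state: the $V_{l,k}$ remain irreducible and pairwise non-isomorphic as representations of the compact group $K=\GL_2(\Zp)$, not merely of $\GL_{2,L}$; this follows from Zariski density of $K$ in $\GL_2$, and the paper isolates this explicitly.

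The genuine difference is in the density step. You invoke Kaplansky's non-archimedean Stone--Weierstrass theorem, after checking that the span of matrix coefficients is a unital point-separating subalgebra (via Clebsch--Gordan for closure under products). The paper instead identifies the image of the evaluation map as the space $\mathcal C^{\alg}(K,L)$ of functions obtained by evaluating elements of the coordinate ring $\mathcal O(\GL_{2,L})$ at $\Zp$-points, embeds $K$ as a closed subset of some $\Zp^n$, and deduces density from Mahler's theorem on $\Zp^n$ followed by restriction. Your route is arguably more self-contained on the representation-theoretic side and avoids choosing coordinates; the paper's route has the advantage of pinning down the image as a specific ring of algebraic functions and of working uniformly for any affine group scheme of finite type over $\Zp$ whose generic fibre is split reductive, without needing to know the tensor-product decomposition of irreducibles.
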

\begin{proof} The argument is  the same as given in the proof of \cite[Prop.5.4.1]{emfm}.  We have provided 
some details in the Appendix at the request of the referee.  It is enough to prove the statement for $\mathcal C(K, L)$, since then it is true for $\mathcal C(K, L)^{\oplus r}$ and by applying the 
idempotent, which cuts out $E$, we may deduce the same statement for $E$. In the case $E=\mathcal C(K, L)$, the assertion 
follows from Proposition \ref{A3} applied to $G=\GL_2$. We note that every rational irreducible representation of $\GL_{2}/L$ is isomorphic to $V_{l,k}$ for a unique pair $(l, k)\in \ZZ\times \NN$. The last assertion follows from \eqref{map3} below.
\end{proof}

\begin{prop}\label{universal} Let $\rho=(\Indu{B}{G}{\chi_1\otimes\chi_2|\centerdot|^{-1}})_{\sm}$ be a smooth  principal series representation of $G$, 
where  $\chi_1,\chi_2:\Qp^{\times}\rightarrow L^{\times}$ smooth characters with $\chi_1|\centerdot|\neq \chi_2$. 
Let $\Pi$ be the universal unitary completion of $\rho\otimes V_{l,k}$. Then $\Pi$ is an admissible, finite length $L$-Banach space  representation of $G$.  Moreover, 
if $\Pi$ is non-zero and we let $\overline{\Pi}^{ss}$ be the semi-simplification of the reduction modulo $\varpi$ of 
an open bounded $G$-invariant lattice in $\Pi$,  then either $\overline{\Pi}^{ss}$ is irreducible supersingular, or 
\begin{equation}\label{delta12}
\overline{\Pi}^{ss} \subseteq (\Indu{B}{G}{\delta_1\otimes\delta_2\omega^{-1}})^{ss}_{\sm} \oplus (\Indu{B}{G}{\delta_2\otimes\delta_1\omega^{-1}})^{ss}_{\sm},
\end{equation} 
for some smooth characters $\delta_1, \delta_2: \Qp^{\times}\rightarrow k^{\times}$, where the superscript $ss$ indicates the semi-simplification.
\end{prop}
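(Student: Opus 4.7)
The plan is to deduce Proposition \ref{universal} from the explicit determination of universal unitary completions of locally algebraic principal series for $\GL_2(\Qp)$ due to Berger--Breuil \cite{bb}, Berger \cite{berger}, Breuil--Emerton \cite{be}, and \cite{except}, together with the $p$-adic local Langlands dictionary that relates the reduction modulo $\varpi$ of the associated Banach space representation to the reduction modulo $\varpi$ of the corresponding $2$-dimensional Galois representation.

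First, I would observe that $\rho\otimes V_{l,k}$ is a locally algebraic principal series, and that the combined work cited above associates, to each datum $(\chi_1,\chi_2,l,k)$ with $\chi_1|\centerdot|\neq\chi_2$, a $2$-dimensional potentially semi-stable $L$-representation $V$ of $\gal$ together with an admissible unitary $L$-Banach space representation $B(V)$ of $G$ of length at most two whose subspace of locally algebraic vectors is isomorphic, as a $G$-representation, to $\rho\otimes V_{l,k}$. The natural $G$-equivariant inclusion $\rho\otimes V_{l,k}\hookrightarrow B(V)$ induces, by the universal property of $\Pi$, a $G$-equivariant continuous map $\Pi\to B(V)$ with dense image. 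The hypothesis $\chi_1|\centerdot|\neq\chi_2$ makes $\rho\otimes V_{l,k}$ irreducible as a locally algebraic representation, so whenever $\Pi\neq 0$ this map must be an isomorphism, and admissibility and finite length of $\Pi$ follow at once from the corresponding properties of $B(V)$.

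Second, for the description of $\overline{\Pi}^{ss}\cong\overline{B(V)}^{ss}$, I would appeal to the explicit computation of the reduction modulo $\varpi$ of a unit ball in $B(V)$ given in the same references. This reduction is determined by the semi-simplified mod-$\varpi$ reduction $\overline{V}^{ss}$ of $V$ through the mod-$p$ Langlands correspondence for $\GL_2(\Qp)$: if $\overline{V}$ is irreducible then $\overline{\Pi}^{ss}$ is an irreducible supersingular representation, while if $\overline{V}^{ss}=\delta_1'\oplus\delta_2'$ for smooth characters $\delta_i'$ of $\gal$, then setting $\delta_i$ to be the characters of $\Qp^{\times}$ attached to $\delta_i'$ via local class field theory yields the inclusion \eqref{delta12} directly.

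The main obstacle is bookkeeping the various cases---$V$ crystalline versus semi-stable non-crystalline, distinct versus equal Frobenius eigenvalues, and the exceptional phenomena at small primes---but each is treated in at least one of \cite{bb}, \cite{berger}, \cite{be}, \cite{except}; assembling them produces exactly the dichotomy stated in the proposition.
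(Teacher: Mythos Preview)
Your overall plan---assembling the explicit results of \cite{bb}, \cite{berger}, \cite{be}, \cite{except}---is the same as the paper's, but the logical skeleton you propose for the first step has a real gap.

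The assertion ``$\chi_1|\centerdot|\neq\chi_2$ makes $\rho\otimes V_{l,k}$ irreducible, so the map $\Pi\to B(V)$ is an isomorphism'' fails on both counts. The hypothesis does not force irreducibility: when $\chi_1=\chi_2|\centerdot|$ (which is compatible with $\chi_1|\centerdot|\neq\chi_2$) the smooth representation $\rho$ is reducible. More importantly, even when $\rho\otimes V_{l,k}$ is irreducible, this does not make the map $\Pi\to B(V)$ injective: the kernel is a closed subspace of $\Pi$ meeting the dense subspace $\rho\otimes V_{l,k}$ trivially, and such subspaces need not be zero. The cited works do not proceed by constructing a separate $B(V)$ and comparing. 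In the non-ordinary case with $\chi_1\neq\chi_2$, Berger--Breuil prove \emph{directly} that the universal unitary completion $\Pi$ is admissible and absolutely irreducible; in the ordinary case Breuil--Emerton identify $\Pi$ with a continuous parabolic induction of a unitary character. So ``$B(V)$'' is $\Pi$ by construction, and its admissibility and finite length are the content of those theorems, not a consequence of your comparison argument.

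The case $\chi_1=\chi_2$ is the one place where your packaging genuinely breaks down. Here Berger--Breuil does not apply, and \cite{except} does not produce an admissible Banach representation containing $\rho\otimes V_{l,k}$. Instead it constructs a finitely generated $\OO[G]$-lattice $M$ in $\rho\otimes V_{l,k}$ and a lattice $M'$ in $\rho'\otimes V_{l,k}$ for a nearby principal series $\rho'$ with distinct characters $\chi_1'\neq\chi_2'$ congruent to $\chi_1,\chi_2$ modulo $1+(\varpi)$, such that $M/\varpi M\cong M'/\varpi M'$. Finite generation of $M$ forces the completion with respect to its gauge to be the universal completion $\Pi$, whence $\overline{\Pi}\cong M/\varpi M\cong M'/\varpi M'$; one then reads off both the admissibility of $\Pi$ and the shape of $\overline{\Pi}^{ss}$ from Berger--Breuil applied to $\rho'\otimes V_{l,k}$. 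Your final paragraph lists this case among the ``bookkeeping'' but does not supply this reduction, which is the actual argument.
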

\begin{proof} If $\Pi\neq 0$ then $-(k+l)\le \val(\chi_1(p))\le -l$, $-(k+l)\le \val(\chi_2(p))\le -l$ and 
$\val(\chi_1(p))+\val(\chi_2(p))=-(k+2l)$,
\cite[Lem.7.9]{comp}, \cite[Lem.2.1]{pem}.  If both inequalities are strict and $\chi_1\neq \chi_2$ then it is shown in
\cite[\S5.3]{bb} that $\Pi$ is non-zero, admissible and absolutely irreducible. The assertion about $\overline{\Pi}^{ss}$ then 
follows from 
\cite{berger}. 

If both inequalities are strict, $\chi_1= \chi_2$ and $\Pi$ is non-zero it is shown in \cite[Prop.4.2]{except} that 
there exist  $\OO$-lattices $M$ in $\rho\otimes V_{l,k}$ and $M'$ in $\rho'\otimes V_{l,k}$, where 
$\rho'=(\Indu{B}{G}{\chi_1'\otimes\chi_2'|\centerdot|^{-1}})_{\sm}$ for some distinct smooth characters, 
$\chi_1', \chi_2':\Qp^{\times}\rightarrow L^{\times}$ congruent to $\chi_1, \chi_2$ modulo $1+(\varpi)$, such that 
both lattices are  finitely generated $\OO[G]$-modules and their reductions modulo $\varpi$ are isomorphic. Since $M$ is $\OO$-torsion 
free, the completion of $\rho\otimes V_{l,k}$ with respect to the gauge of $M$ is non-zero, and since $M$ is a finitely generated 
$\OO[G]$-module, the completion is the universal unitary completion, \cite[Prop.1.17]{pem}, 
thus is isomorphic to $\Pi$. Let $\Pi^0$ be the unit ball in $\Pi$
with respect to the gauge of $M$. Then $\Pi^0/\varpi \Pi^0\cong M/\varpi M\cong M'/\varpi M'$. Now by the same argument 
the completion of $\rho'\otimes V_{l,k}$ with respect to the gauge of $M'$ is the universal unitary completion of 
of $\rho'\otimes V_{l,k}$. Since $\chi_1'\neq \chi_2'$ we may apply the results of Berger-Breuil \cite{bb} to conclude that 
the semi-simplification of $M'/\varpi M'$ has the desired form.

Suppose that either $\val(\chi_1(p))=-l$ or $\val(\chi_2(p))=-l$. If $\chi_1=\chi_2|\centerdot|$ then this forces $k=1$, so that
$V_{l,k}$ is a character and $\rho\otimes V_{l, k}\cong (\Indu{B}{G}{|\centerdot|\otimes|\centerdot|^{-1}})_{\sm}\otimes \eta$, where 
$\eta:G\rightarrow L^{\times}$ is a unitary character. It follows from \cite[Lem.5.3.18]{emcoates} that the universal unitary completion
of $\rho\otimes V_{l,k}$ is admissible and of length $2$. Moreover, 
$\overline{\Pi}^{ss}\cong \overline{\eta}\oplus \Sp \otimes \overline{\eta}\cong 
(\Indu{B}{G}{\overline{\eta}\otimes\overline{\eta}})_{\sm}^{ss}$. If $\chi_1\neq \chi_2|\centerdot|$ then it follows from 
\cite[Lem.2.2.1]{be} that the universal unitary completion of $\rho\otimes V_{l, k}$ is isomorphic 
to a continuous induction of a unitary character. Hence $\overline{\Pi}^{ss}$ is isomorphic to the semi-simplification of a principal series
representation. 
\end{proof}

\begin{proof}[Proof of Theorem \ref{main}] Let $(E, \|\centerdot\|)$ be the unitary $L$-Banach space representation of $G$ constructed 
in the proof of Theorem \ref{lift}. Let $E^0$ be the unit ball in $E$, then by construction we have $E^0/\varpi E^0\cong \Omega$, where
$\Omega$ is a smooth $k$-representation of $G$, satisfying the conditions of Proposition \ref{omega}.  Let 
$V=\oplus \Hom_K(V_{l,k}, E)\otimes V_{l, k}$, where the sum is taken over all $(l, k)\in \ZZ\times \NN$. It follows from 
Corollary \ref{summand} and Proposition \ref{Kalg} that the natural map $V\rightarrow E$ is injective and the image is dense. 
Let $\{V^i\}_{i\ge 0}$ be any increasing, exhaustive filtration of $V$ by finite dimensional $K$-invariant subspaces. Then $V^i\cap E^0$ 
is a $K$-invariant $\OO$-lattice in $V^i$, and we denote by $\overline{V}^i$ its reduction modulo $\varpi$. It follows from \cite[Lem.5.5]{comp} 
that the reduction modulo $\varpi$ induces a $K$-equivariant  injection $\overline{V}^i\hookrightarrow \Omega$. The density of $V$ in $E$ 
implies that  $\{\overline{V}^i\}_{i\ge 0}$ is an increasing, exhaustive filtration of $\Omega$ by finite dimensional, $K$-invariant 
subspaces. Recall that $\Omega$ contains $\tau$ as a subrepresentation, see Proposition \ref{omega}. Now  $\tau$ is finitely generated 
as a $G$-representation, since it is of finite length. Thus we may conclude, that  there exists a finite dimensional 
$K$-invariant subspace $W$ of $V$, such that $\tau$ is contained in the $G$-subrepresentation of $\Omega$ generated by $\overline{W}$.

Let $\varphi: V_{l,k}\rightarrow E$ be a non-zero $K$-equivariant, $L$-linear homomorphism. Let $R(\varphi)$ be the  $G$-sub\-rep\-re\-sen\-ta\-tion  
of $E$ in the category of (abstract) $G$-rep\-re\-sen\-ta\-tions on $L$-vector spaces, 
generated by the image of $\varphi$. Frobenius reciprocity gives us 
a surjection $\cIndu{KZ}{G}{\tilde{\Eins}}\otimes V_{l,k}\twoheadrightarrow R(\varphi)$, where $\tilde{\Eins}: KZ\rightarrow L^{\times}$ is 
an unramified character, such that $\bigl (\begin{smallmatrix} p & 0 \\ 0 & p\end{smallmatrix} \bigr)$ acts trivially on 
$V_{l,k} \otimes \tilde{\Eins}$. Now $\End_G(\cIndu{KZ}{G}{\tilde{\Eins}})$ is isomorphic to the ring of polynomials over $L$ 
in one variable $T$. It follows from the proof of \cite[Cor.7.4]{comp} that the surjection factors through 
$ \frac{\cIndu{KZ}{G}{\tilde{\Eins}}}{P(T)}\otimes V_{l,k}\twoheadrightarrow R(\varphi)$, for some non-zero $P(T)\in L[T]$.

Let $R$ be the (abstract) $G$-subrepresentation of $E$ generated by $W$, and let $\Pi$ be the closure of $R$ in $E$. Since $W$
is isomorphic to a finite direct sum of $V_{l,k}$'s, we deduce that if we replace $L$ by a finite extension there exists a 
surjection: 
\begin{equation}\label{surjR} 
\bigoplus_{i=1}^m \frac{\cIndu{KZ}{G}{\tilde{\Eins}_i}}{(T-a_i)^{n_i}}\otimes V_{l_i, k_i} \twoheadrightarrow R,
\end{equation}
for some $a_i\in L$, $n_i\in \NN$ and $(l_i,k_i)\in \ZZ\times \NN$. Let $\rho_i= \frac{\cIndu{KZ}{G}{\tilde{\Eins}_i}}{T-a_i}$, then
using \eqref{surjR} we may construct a finite, increasing, exhaustive  filtration $\{R^j\}_{j\ge 0}$ of $R$ by $G$-invariant subspaces, 
such that for each $j$ there exists a surjection $\rho_i\otimes V_{l_i, k_i}\twoheadrightarrow R^{j}/R^{j-1}$, for some $1\le i\le m$. 
Moreover, by choosing $n_i$ and $m$ in \eqref{surjR} to be minimal, we may assume that $\Hom_G(\rho_i\otimes V_{l_i, k_i}, R)$ 
is non-zero for all $1\le i\le m$. Let $\Pi^j$ be the closure of $R^j$ in $E$. We note that since $E$ is admissible, $\Pi^j$ is an
admissible unitary $L$-Banach space representation of $G$, moreover the category $\Ban^{\adm}_G(L)$ is abelian. Since $R^j$ is dense in 
$\Pi^j$, its image is dense in $\Pi^j/\Pi^{j-1}$. Hence, for each $j$  there exists  a $G$-equivariant map 
$\varphi_j:\rho_i\otimes V_{l_i, k_i}\rightarrow \Pi^j/\Pi^{j-1}$ with a dense image. Let $\Pi_i$ be the universal unitary completion 
of $\rho_i\otimes V_{l_i, k_i}$. Since the target of $\varphi_j$ is unitary, we can extend it to a continuous $G$-equivariant map 
$\tilde{\varphi}_j: \Pi_i\rightarrow \Pi^j/\Pi^{j-1}$. Moreover, since the target of $\varphi_j$ is admissible and 
the image is dense, $\tilde{\varphi}_j$ is surjective. 

For each closed subspace $U$ of $E$, we let $\overline{U}$ be the reduction of $(U\cap E^0)$ modulo $\varpi$. It follows from 
\cite[Lem.5.5]{comp} that the reduction modulo $\varpi$ induces an injection $\overline{U}\hookrightarrow \Omega$.
Since $\Pi$ contains $W$, $\overline{\Pi}$ will contain $\overline{W}$. Since $\overline{\Pi}$ is $G$-invariant, it will contain 
$\tau$. Now $\{\overline{\Pi}^j\}_{j\ge 0}$ defines a finite,  increasing, exhaustive filtration of $\overline{\Pi}$ by $G$-invariant subspaces. 
Since $\pi_2$ is an irreducible  subquotient of $\tau$, there exists $j$, such that $\pi_2$ is an irreducible subquotient of 
$\overline{\Pi}^j/\overline{\Pi}^{j-1}$. 

Each representation $\rho_i$ is an unramified principal series representation, considered in Proposition \ref{universal}, see 
\cite[Prop.3.2.1]{breuil2}. Hence, $\Pi_i$ is an admissible, finite length $L$-Banach space representation of $G$, moreover 
$\overline{\Pi}^{ss}_i$ is of finite length as described in Proposition \ref{universal}. The surjection 
$\tilde{\varphi}_j: \Pi_i\twoheadrightarrow \Pi^j/\Pi^{j-1}$ induces a surjection 
$\overline{\Pi}^{ss}_i\twoheadrightarrow \overline{(\Pi^j/\Pi^{j-1})}^{ss}$. It follows from \cite[Lem.5.5]{comp} that 
the semi-simplification of $\overline{\Pi}^j/\overline{\Pi}^{j-1}$ is isomorphic to $\overline{(\Pi^j/\Pi^{j-1})}^{ss}$.
Thus $\pi_2$ is a subquotient of $\overline{\Pi}^{ss}_i$. 

Since $\Hom_G(\rho_i\otimes V_{l_i, k_i}, \Pi)$ is non-zero, there exists a non-zero continuous $G$-in\-va\-riant homomorphism 
$\varphi:\Pi_i\rightarrow  \Pi$. Let $\Sigma$ be the image of $\varphi$. Since $\Pi_i$ and $\Pi$ are admissible, we have 
a surjection $\Pi_i\twoheadrightarrow \Sigma$ and an injection $\Sigma\hookrightarrow \Pi$ in the abelian category $\Ban^{\adm}_G(L)$.
The surjection induces a surjection $\overline{\Pi}_i^{ss}\twoheadrightarrow \overline{\Sigma}^{ss}$. The injection induces an 
injection $\overline{\Sigma}\hookrightarrow \overline{\Pi}\hookrightarrow \Omega$. Since $\soc_G \Omega\cong \pi_1$ by Corollary
\ref{socleomega} and $\overline{\Sigma}$ is non-zero, we deduce that $\pi_1\cong \soc_G \overline{\Sigma}$. Hence, $\pi_1$ is
 a subquotient of $\overline{\Pi}^{ss}_i$.  
\end{proof}

\begin{lem}\label{scalars} 
Let $\kappa$ and $\lambda$ be smooth $k$-representations of $G$ and let $l$ be a finite extension of $k$. Then 
$\Ext^i_G(\kappa, \lambda)\otimes_k l \cong \Ext^i_G(\kappa\otimes_k l, \lambda\otimes_k l)$, for all $i\ge 0$, where 
the $\Ext$ groups are computed in $\Mod^{\sm}_G(k)$ and $\Mod^{\sm}_G(l)$, respectively.
\end{lem}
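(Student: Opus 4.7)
The plan is to reduce both sides to the cohomology of a single complex by means of an injective resolution, exploiting the adjunctions between $\Mod^{\sm}_G(k)$ and $\Mod^{\sm}_G(l)$. Write $F:\Mod^{\sm}_G(k)\to \Mod^{\sm}_G(l)$, $M\mapsto M\otimes_k l$, for extension of scalars and $U$ for restriction in the opposite direction. Clearly $U$ is exact, and $F$ is exact because $l$ is flat over $k$. The crucial point is that $F$ also preserves injectives: since $l/k$ is a finite field extension, it is a Frobenius extension (there is an isomorphism $\Hom_k(l,k)\cong l$ of $(l,l)$-bimodules), so $F$ is simultaneously left and right adjoint to $U$. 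Having an exact left adjoint ($U$ itself) forces $F$ to send injectives to injectives.

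The case $i=0$ is a direct computation from adjunction. Using that $U(\lambda\otimes_k l)$ is a finite direct sum of copies of $\lambda$ as a smooth $k$-representation of $G$,
\begin{equation*}
\Hom_{G,l}(\kappa\otimes_k l,\lambda\otimes_k l)\cong \Hom_{G,k}(\kappa,U(\lambda\otimes_k l))\cong \Hom_{G,k}(\kappa,\lambda)\otimes_k l,
\end{equation*}
where the last isomorphism uses that $\Hom$ commutes with finite direct sums in the target.

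For general $i\ge 0$, pick an injective resolution $\lambda\to I^\bullet$ in $\Mod^{\sm}_G(k)$; such resolutions exist, for instance by Pontryagin duality with the category of pseudocompact $k\br{G}$-modules used already in the proof of Theorem~\ref{lift}. Since $F$ is exact and preserves injectives, $\lambda\otimes_k l\to I^\bullet\otimes_k l$ is an injective resolution of $\lambda\otimes_k l$ in $\Mod^{\sm}_G(l)$. Applying the $i=0$ identification term-wise produces an isomorphism of complexes $\Hom_{G,l}(\kappa\otimes_k l, I^\bullet\otimes_k l)\cong \Hom_{G,k}(\kappa,I^\bullet)\otimes_k l$. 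Taking cohomology and using that tensoring with the flat $k$-module $l$ commutes with cohomology of complexes of $k$-vector spaces yields the claimed isomorphism $\Ext^i_G(\kappa,\lambda)\otimes_k l\cong \Ext^i_G(\kappa\otimes_k l,\lambda\otimes_k l)$.

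The only real point to verify is the preservation of injectives by $F$, which is automatic from the Frobenius property of a finite field extension; everything else is formal. I expect this step to be the main place where one must be explicit, particularly to flag the finiteness hypothesis on $l/k$, since without it the identification in the $i=0$ step (commuting $\Hom$ with $\otimes_k l$) would fail.
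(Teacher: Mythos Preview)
Your proof is correct and follows the same overall scheme as the paper---handle $i=0$, then produce an injective resolution of $\lambda$ that stays injective after $-\otimes_k l$---but the key step is argued differently. The paper exhibits a specific class of injectives that visibly survive base change: the representations $(\Indu{\{1\}}{G}{V})_{\sm}$ for $V$ a $k$-vector space, which are injective as right adjoints to the exact forgetful functor and satisfy $(\Indu{\{1\}}{G}{V})_{\sm}\otimes_k l\cong (\Indu{\{1\}}{G}{V\otimes_k l})_{\sm}$ since $l/k$ is finite. You instead prove that $-\otimes_k l$ preserves \emph{all} injectives, by identifying it with the right adjoint to restriction via the Frobenius property of a finite field extension. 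The paper's version is more concrete and self-contained; yours is cleaner, applies to an arbitrary injective resolution, and makes explicit that finiteness of $l/k$ is the only hypothesis needed. One minor quibble: your aside justifying enough injectives via ``Pontryagin duality with pseudocompact $k\br{G}$-modules'' is not quite right as stated, since $G=\GL_2(\Qp)$ is not compact and the paper only invokes $\OO\br{K}$ for the compact open $K$; but enough injectives in $\Mod^{\sm}_G(k)$ is standard, and indeed the paper's $(\Indu{\{1\}}{G}{V})_{\sm}$ already supply them.
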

\begin{proof} The assertion for $i=0$ follows from \cite[Lem.5.1]{cmf}. Hence, it is enough to find an injective resolution of 
$\lambda$ in $\Mod^{\sm}_G(k)$, which remains injective after tensoring with $l$. Such resolution may be obtained by considering
$(\Indu{\{1\}}{G}{V})_{\sm}$, where $\{1\}$ is the trivial subgroup of $G$ and $V$ is a $k$-vector space. We note that 
$(\Indu{\{1\}}{G}{V})_{\sm}\otimes_k l\cong (\Indu{\{1\}}{G}{V\otimes_k l})_{\sm}$, since $l$ is finite over $k$. 
\end{proof}

\begin{proof}[Proof of Corollary \ref{blocks}] Lemma \ref{scalars} implies that replacing $L$ by a finite extension does not 
change the blocks. It follows from Proposition \ref{universal} and Theorem \ref{main} that
an irreducible  supersingular representation is in a block on its own. Let $\pi\{\delta_1, \delta_2\}$ be  the semi-simple representation 
defined by \eqref{delta12}, where $\delta_1, \delta_2:\Qp^{\times}\rightarrow k^{\times}$ are smooth characters. We have to show that all 
irreducible subquotients of  $\pi\{\delta_1, \delta_2\}$ lie in the same block. We adopt an argument used in \cite{colmez}. 
It follows from \cite[5.3.3.1, 5.3.3.2, 5.3.4.1]{breuil2} that there exists an irreducible unitary $L$-Banach space representation 
$\Pi$ of $G$, such that $\overline{\Pi}^{ss}\cong \pi\{\delta_1, \delta_2\}$, then \cite[Prop.VII.4.5(i)]{colmez} asserts that
we may choose an open bounded $G$-invariant lattice $\Theta$ in $\Pi$ such that $\Theta/\varpi \Theta$ is indecomposable. It follows from 
\eqref{blocksdecompose} that all the irreducible subquotients of $\Theta/\varpi \Theta$ lie in the same block. 

We will list explicitly the irreducible subquotients of $\pi\{\delta_1, \delta_2\}$. It is shown in \cite{bl} 
that if $\delta_2 \delta_1^{-1}\neq \omega$ then $(\Indu{B}{G}{\delta_1\otimes\delta_2\omega^{-1}})_{\sm}$ is absolutely irreducible, 
and there exists a non-split exact sequence 
\begin{equation}\label{spblock}
0\rightarrow \delta_1\circ \det \rightarrow  (\Indu{B}{G}{\delta_1\otimes\delta_2\omega^{-1}})_{\sm}\rightarrow \Sp\otimes\delta_1\circ \det
\rightarrow 0
\end{equation} 
if $\delta_2\delta_1^{-1}=\omega$. Taking this into account there are the following possibilities for decomposing 
$\pi\{\delta_1, \delta_2\}$ into irreducible direct summands depending 
on $\delta_1$, $\delta_2$ and $p$:
\begin{itemize} 
\item[(i)] If $\delta_2\delta_1^{-1}\neq \omega^{\pm 1}, \Eins$ then
 $$\pi\{\delta_1, \delta_2\}\cong (\Indu{B}{G}{\delta_1\otimes\delta_2\omega^{-1}})_{\sm} \oplus 
(\Indu{B}{G}{\delta_2\otimes\delta_1\omega^{-1}})_{\sm};$$
\item[(ii)] if $\delta_2=\delta_1(=\delta)$ then 
\begin{itemize}
\item[(a)] if $p>2$ then $\pi\{\delta, \delta\}\cong (\Indu{B}{G}{\delta\otimes\delta\omega^{-1}})_{\sm}^{\oplus 2}$;
\item[(b)] if $p=2$ then $\pi\{\delta, \delta\} \cong (\Sp^{\oplus 2} \oplus \Eins^{\oplus 2})\otimes \delta\circ \det$.
\end{itemize}
\item[(iii)] if $\delta_2\delta_1^{-1}=\omega^{\pm 1}$ then 
\begin{itemize} 
\item[(a)] if $p\ge 5$ then 
$\pi\{\delta_1, \delta_2\}\cong (\Eins\oplus \Sp\oplus (\Indu{B}{G}{\omega\otimes \omega^{-1}})_{\sm})\otimes \delta\circ \det$;
\item[(b)] if $p=3$ then 
$\pi\{\delta_1, \delta_2\}\cong (\Eins\oplus \Sp\oplus \omega\circ \det \oplus \Sp\otimes \omega\circ \det)\otimes \delta\circ \det;$
\item[(c)] if $p=2$ then we are in the case (ii)(b), 
\end{itemize}
where $\delta$ is either $\delta_1$ or $\delta_2$.
\end{itemize}
Finally, we note that in the case (ii)(b) instead of using \cite[5.3.3.2]{breuil2}, which is stated without proof, we could have observed 
that since \eqref{spblock} is non-split,  $\Sp\otimes\delta_1\circ \det$ and $\delta_1\circ\det$ lie in the same block. 
\end{proof}

\appendix
\section{Density of algebraic vectors}

Let $X$ be an affine  scheme of finite type over $\Zp$ and let $A=\Gamma(X, \mathcal O_X)$. By choosing an isomorphism $A\cong\Zp[x_1, \ldots, x_n]/(f_1, \ldots, f_m)$ we may identify 
the $X(\Zp)$ with a closed subset of $\Zp^n$. The induced topology on $X(\Zp)$ is independent of a choice of the isomorphism, see \cite[Prop.2.1]{conrad}. Let  $\mathcal C(X(\Zp), L)$ be the space of continuous functions from $X(\Zp)$ to $L$.
 Since $X(\Zp)$ is  compact, $\mathcal C(X(\Zp), L)$ equipped with the supremum norm is an $L$-Banach space. Recall that $X(\Zp)= \Hom_{\Zp-alg}(A, \Zp)$.  We denote by 
 $\mathcal C^{\alg}(X(\Zp), L)$ the functions $f: X(\Zp)\rightarrow L$, which are obtained by evaluating elements of $A\otimes_{\Zp} L$ at  $\Zp$-valued points of $X$. 

\begin{lem}\label{dense_algebraic} $\mathcal C^{\alg}(X(\Zp), L)$ is a dense subspace of $\mathcal C(X(\Zp), L)$.
\end{lem}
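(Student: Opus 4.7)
The plan is to invoke the non-Archimedean Stone--Weierstrass theorem (due to Kaplansky), which asserts that for a compact Hausdorff space $Y$ and a non-Archimedean field $L$, any $L$-subalgebra of $\mathcal C(Y,L)$ that contains the constants and separates points is dense with respect to the supremum norm.

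First I would check that $X(\Zp)$, being identified with a closed subset of $\Zp^n$, is a compact Hausdorff (and totally disconnected) space, so that the hypotheses of Kaplansky's theorem are satisfied. Next, I would verify that $\mathcal C^{\alg}(X(\Zp),L)$ is an $L$-subalgebra of $\mathcal C(X(\Zp),L)$ containing the constants: this is immediate from the fact that $A\otimes_{\Zp} L$ is an $L$-algebra and the evaluation map $A\otimes_{\Zp} L\to \mathcal C(X(\Zp),L)$, $a\mapsto (x\mapsto x(a))$, is a homomorphism of $L$-algebras whose image by definition is $\mathcal C^{\alg}(X(\Zp),L)$; the constants are the images of $L\subseteq A\otimes_{\Zp} L$.

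The key remaining point is that this subalgebra separates points. Given distinct $x,y\in X(\Zp)\subseteq \Zp^n$, they differ in at least one coordinate, say the $i$-th, so the image of $x_i\in \Zp[x_1,\ldots,x_n]$ in $A\otimes_{\Zp} L$ produces an algebraic function on $X(\Zp)$ taking distinct values at $x$ and $y$. Kaplansky's theorem then yields density.

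The main (and only) obstacle is citing the correct form of non-Archimedean Stone--Weierstrass; an alternative, which avoids this citation and is essentially equivalent, would be to combine Mahler's theorem (the monomials $\binom{x_1}{k_1}\cdots\binom{x_n}{k_n}$ form an orthonormal basis of $\mathcal C(\Zp^n,L)$, so polynomial functions are dense there) with the fact that a continuous $L$-valued function on a closed subset of a totally disconnected compact Hausdorff space extends continuously to the ambient space, making the restriction map $\mathcal C(\Zp^n,L)\to \mathcal C(X(\Zp),L)$ surjective; the image of the dense subalgebra of polynomial functions is then precisely $\mathcal C^{\alg}(X(\Zp),L)$.
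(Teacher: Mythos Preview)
Your proposal is correct. The primary route you suggest (Kaplansky's non-Archimedean Stone--Weierstrass) is genuinely different from the paper's argument, which is in fact precisely the ``alternative'' you sketch at the end: the paper first treats the case $X=\mathbb A^n$ via Mahler expansions (citing Lazard), then for general $X$ uses the surjectivity of the restriction map $\mathcal C(\Zp^n,L)\to\mathcal C(X(\Zp),L)$ (citing Ellis for extension of continuous functions on zero-dimensional spaces) to push the dense subspace of polynomials down to $\mathcal C^{\alg}(X(\Zp),L)$.

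The Stone--Weierstrass approach is slightly cleaner in that it handles the general $X$ directly without reducing to $\mathbb A^n$ and without needing the extension lemma; on the other hand it imports a black box whose proof, unwound, is of comparable depth to the Mahler/extension argument. The paper's route has the minor advantage of keeping the inputs explicit and standard (Mahler's theorem and a Tietze-type extension for zero-dimensional spaces), at the cost of one extra reduction step. Either argument is perfectly adequate here.
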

\begin{proof} We first look at the special case, when $X=\mathbb A^n$, so that $A=\Zp[x_1, \ldots, x_n]$ and $X(\Zp)=\Zp^{n}$. Since addition and multiplication 
in $\Zp$ are continuous functions, we deduce that  $\mathcal C^{\alg}(\mathbb A(\Zp), L)$ is a subspace of $\mathcal C(\mathbb A(\Zp), L)$. The density follows from the theory of 
Mahler expansions, see for example \cite[III.1.2.4]{laz}. In the general case, we choose an isomorphism $A\cong\Zp[x_1, \ldots, x_n]/(f_1, \ldots, f_m)$ and  identify $X(\Zp)$ with a closed subset of $\mathbb A^n(\Zp)=\Zp^{ n}$. 
The restriction of functions to $X(\Zp)$ induces a surjective map $r: \mathcal C(\mathbb A^n(\Zp), L)\rightarrow \mathcal C(X(\Zp), L)$, see for example \cite[Thm.3.1(1)]{ellis}.
Since $\mathcal C^{\alg}(\mathbb A^n(\Zp), L)$ is dense in $\mathcal C(\mathbb A^n(\Zp), L)$ and $\sup_{x\in X(\Zp)} |r(f)(x)|\le \sup_{x\in \mathbb A^n(\Zp)}| f(x)|$ for all $f\in \mathcal C(\mathbb A^n(\Zp), L)$
we deduce  that $r( \mathcal C^{\alg}(\mathbb A^n(\Zp), L))$ is a dense subspace. Since it is equal to $\mathcal C^{\alg}(X(\Zp), L)$ we are done.
\end{proof}

\begin{remar} If $X$ is an affine scheme of finite type over $\mathcal O_F$,  where  $\mathcal O_F$ is a ring of integers in a finite field extension $F$ over $\Qp$, then 
there are two ways to topologize $X(\mathcal O_F)$: as $\mathcal O_F$ points of $X$ and as $\Zp$-points of the Weil restriction of $X$ to $\Zp$. However, they  coincide, see \cite[Ex.2.4]{conrad}.
\end{remar}

\begin{prop}\label{A3} Let $G$ be an affine group scheme of finite type over $\Zp$ such that $G_L$ is a split connected reductive group over $L$. Then  the evaluation map
\begin{equation}\label{map}
\bigoplus_{[V]} \Hom_{G(\Zp)}(V, \mathcal C(G(\Zp), L))\otimes V \rightarrow \mathcal C(G(\Zp), L),
\end{equation}
where the sum is taken over all the isomorphism classes of irreducible rational representations of $G_L$, is injective and the image is equal to $\mathcal C^{\alg}(G(\Zp), L)$. In particular, 
the image of \eqref{map} is a dense subspace of  $\mathcal C(G(\Zp), L)$.
\end{prop}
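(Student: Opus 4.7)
The plan is to reduce the statement to the algebraic Peter--Weyl theorem for the split connected reductive group $G_L$ and then invoke Lemma~\ref{dense_algebraic}. Write $A_L := A \otimes_{\Zp} L = \mathcal O(G_L)$. The algebraic Peter--Weyl decomposition for $G_L$ gives a canonical $(G_L \times G_L)$-equivariant isomorphism
\[
A_L \cong \bigoplus_{[V]} V^* \otimes_L V,
\]
where the sum runs over isomorphism classes of irreducible rational representations of $G_L$ and the summand $V^* \otimes V$ is the space of matrix coefficients of $V$.

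The first step is to identify $A_L$ with $\mathcal C^{\alg}(G(\Zp), L)$. The evaluation map $A_L \to \mathcal C(G(\Zp), L)$ is surjective onto $\mathcal C^{\alg}(G(\Zp), L)$ by definition of $\mathcal C^{\alg}$, and it is injective because $G(\Zp)$ is Zariski-dense in $G_L$: this reduces, after choosing a $\Qp$-basis of $L$, to the standard fact that the $\Qp$-points of the smooth connected $\Qp$-group $G_{\Qp}$ are Zariski-dense in it, together with the openness of $G(\Zp)$ in $G(\Qp)$.

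The second step, which is the heart of the argument, identifies the multiplicity space $V^*$ with $\Hom_{G(\Zp)}(V, \mathcal C(G(\Zp), L))$, using the right-translation action of $G(\Zp)$ on $\mathcal C(G(\Zp), L)$. The natural map
\[
V^* \longrightarrow \Hom_{G(\Zp)}\bigl(V, \mathcal C(G(\Zp), L)\bigr), \qquad \phi \longmapsto \bigl(v \mapsto [g \mapsto \phi(g \cdot v)]\bigr)
\]
is injective because the $L$-linear span of $G(\Zp)\cdot v$ is all of $V$ by Zariski density and irreducibility. For surjectivity, given a $G(\Zp)$-equivariant $\psi$, set $\phi(v) := \psi(v)(1)$; equivariance forces $\psi(v)(g) = \phi(g \cdot v)$ for every $g \in G(\Zp)$, and since $g \mapsto g \cdot v$ is algebraic in $g$, the function $\psi(v)$ automatically lies in $\mathcal C^{\alg}(G(\Zp), L)$ and agrees with the matrix coefficient of $\phi$ and $v$.

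Combining the two identifications, the evaluation map in the statement becomes the Peter--Weyl isomorphism $\bigoplus V^* \otimes V \xrightarrow{\cong} A_L$ composed with the injection $A_L \hookrightarrow \mathcal C(G(\Zp), L)$ whose image is $\mathcal C^{\alg}(G(\Zp), L)$; density is then immediate from Lemma~\ref{dense_algebraic}. The step requiring the most care is the surjectivity in step two: one has to use that every $L$-linear map out of the finite-dimensional $V$ is automatically continuous, so that $G(\Zp)$-equivariance alone suffices to force algebraicity of $\psi(v)$ without any a priori continuity hypothesis on $\psi$ viewed as a map into the Banach space $\mathcal C(G(\Zp), L)$.
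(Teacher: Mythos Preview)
Your proposal is correct and follows essentially the same route as the paper: both combine the algebraic Peter--Weyl decomposition $\mathcal{O}(G_L)\cong\bigoplus_{[V]} V^*\otimes V$ with the Frobenius-reciprocity isomorphism $\Hom_{G(\Zp)}(V,\mathcal{C}(G(\Zp),L))\cong V^*$ given by $\varphi\mapsto[v\mapsto\varphi(v)(1)]$ and its inverse $\ell\mapsto[v\mapsto[g\mapsto\ell(gv)]]$, and then invoke Lemma~\ref{dense_algebraic} for density. You are somewhat more explicit than the paper in justifying the identification $A_L\cong\mathcal{C}^{\alg}(G(\Zp),L)$ via Zariski density of $G(\Zp)$, which the paper states (as \eqref{map2}) without further comment.
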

\begin{proof} The category of rational representations of $G_L$ is semi-simple as $L$ is of characteristic $0$, see \cite[II.5.6 (6)]{jantzen}. Hence, 
the regular representation $\OO(G_L)$ decomposes into a direct sum of irreducible representations.  Since we have assumed that $G_L$ is split, every irreducible rational representation $V$ of $G_L$ is absolutely irreducible \cite[II.2.9]{jantzen}. This implies that $\End_{G_L}(V)=L$ for every irreducible representation $V$. 
It follows from Frobenius reciprocity \cite[I.3.7 (3)]{jantzen} and the semi-simplicity of $\OO(G_L)$ that we have an isomorphism of $G_L$-representations:
\begin{equation}\label{map1}
 \OO(G_L)\cong \bigoplus_{[V]} V^*\otimes V
 \end{equation}
where the  $G_L$-action on $V^*$ is trivial. The isomorphism 
\eqref{map1} is $G(L)$-equivariant, and hence $G(\Zp)$-equivariant, which gives us an isomorphism of $G(\Zp)$-rep\-re\-sen\-tations: 
\begin{equation}\label{map2}
\mathcal C^{\alg}(G(\Zp), L)\cong \bigoplus_{[V]} V^* \otimes V.
\end{equation}
The map $\varphi \mapsto [ v\mapsto \varphi(v)(1)]$ induces an isomorphism  
\begin{equation}\label{map3}
\Hom_{ G(\Zp)}(V, \mathcal C(G(\Zp), L))\cong V^*,
\end{equation} 
 with the inverse map given by $\ell \mapsto [v \mapsto [g\mapsto \ell( g v)]]$. Since every $V$ is a finite dimensional $L$-vector space, we conclude from 
 \eqref{map2}, \eqref{map3}
 that the injection
 \begin{equation}\label{map4}
\Hom_{G(\Zp)}(V, \mathcal C^{\alg}(G(\Zp), L))\hookrightarrow \Hom_{G(\Zp)}(V, \mathcal C(G(\Zp), L))
\end{equation}
is an isomorphism. Moreover, as a byproduct we obtain that $\End_{G(\Zp)}(V)=L$ and $\Hom_{G(\Zp)}(V, W)=0$, if $V$ and $W$ are non-isomorphic 
irreducible representations of $G_L$. We conclude that the evaluation map is injective, and the image is equal to $\mathcal C^{\alg}(G(\Zp), L)$. 
Lemma \ref{dense_algebraic} implies the last assertion.
\end{proof}

\end{document}